\def\XXint#1#2#3{{\setbox0=\hbox{$#1{#2#3}{\int}$}
		\vcenter{\hbox{$#2#3$}}\kern-.5\wd0}}
\newcommand{\R}{{\mathbb R}}
\newcommand{\lt}{\left}
\newcommand{\rt}{\right}
\newcommand{\lm}{\lambda}
\newcommand{\nn}{\nonumber}
\newcommand{\ti}{\tilde}
\newcommand{\qd}{\quad}
\newtheorem{thm}{Theorem}[section]
\newtheorem{prop}[thm]{Proposition}
\newtheorem{lem}[thm]{Lemma}
\newtheorem{cor}[thm]{Corollary}
\theoremstyle{definition}
\newtheorem{rem}[thm]{Remark}
\newtheorem*{rem*}{Remark}
\newtheorem{defin}[thm]{Definition}
\numberwithin{equation}{section}
\title{Nonexistence of a class of $T_N$ configurations for a hyperbolic system with one entropy}
\author{Guanying Peng\footnote{Department of Mathematical Sciences, Worcester Polytechnic Institute, Worcester, MA 01609, USA. Email: \href{mailto:gpeng@wpi.edu}{gpeng@wpi.edu}} \and Anthony Vuolo\footnote{Department of Mathematics, Iowa State University, Ames, IA 50014, USA. Email: \href{mailto:vuolo@iastate.edu}{vuolo@iastate.edu}}}
\date{}
\begin{document}

\maketitle

\begin{abstract}
In \cite{KMS03study}, Kirchheim, M\"{u}ller and \v{S}ver\'{a}k proposed the program to use the differential inclusion approach to study entropy solutions for systems of conservation laws. In particular, they raised questions concerning the local structure of the rank-one convex hull of a set $K_a\subset\R^{3\times 2}$, which arises from the differential inclusion formulation of a classical $2\times 2$ system of conservation laws (the $p$-system) coupled with one entropy. Recently, this question has been studied extensively by showing that the set $K_a$ does not contain the so-called $T_N$ configurations for $N=4$ and $N=5$. In this paper, we continue this program by showing that the set $K_a$ does not contain a class of three-dimensional $T_N$ configurations, as well as two-dimensional $T_N$ configurations for general $N$.
\end{abstract}

\section{Introduction}
\label{sec_introduction}
The notion of entropy solutions as a selection criterion for physically relevant solutions has achieved great success for scalar conservation laws. However, for systems of conservation laws, the theory of entropy solutions (e.g., uniqueness) is much more limited. In \cite{KMS03study}, Kirchheim, M\"{u}ller and \v{S}ver\'{a}k pointed out a classical $2 \times 2$ system of conservation laws in one space dimension given by
\begin{equation}\label{eq201}
	\begin{cases}
	&v_t - u_x = 0, \\
	&u_t - a(v)_x = 0
	\end{cases}
\end{equation}
in the form of the $p$-system. Here $u$ and $v$ are the unknown functions, and $a\in C^2(\R)$ is a given function. This system has a natural physical entropy/entropy flux pair $(\eta, q)$ given by
\begin{equation*}%\label{eq:ent}
\eta(u,v) := \frac{1}{2}u^2 + F(v),\qquad q(u,v) := ua(v), 
\end{equation*}
where the function $F$ satisfies $F' = a$. In \cite{DP85compensated}, DiPerna studied bounded weak solutions to the system \eqref{eq201} satisfying the entropy condition
\begin{equation}\label{eq:entcondition}
	\eta(u,v)_t - q(u,v)_x \leq 0,
\end{equation}
and an additional entropy condition involving a dual entropy/entropy flux pair. Under the assumption that the function $a$ is strictly convex and increasing, i.e., the system \eqref{eq201} is strictly hyperbolic and genuinely nonlinear, DiPerna proved the local existence of solutions using the compensated compactness method. As a model problem to understand the compactness properties of entropy solutions for general systems, many of which possess only one entropy, the authors of \cite{KMS03study} raised questions concerning the local compactness of the differential inclusion
\begin{equation}\label{eq:diffinc}
    D\psi \in K_a \subset \mathbb{R}^{3\times2}
\end{equation}
for some vector-valued function $\psi$, where
\begin{equation}
    \label{eq_kspace}
        K_a:=\left\{\begin{bmatrix}
        u&v\\a(v)&u\\ua(v)&\frac{u^2}{2}+F(v)
    \end{bmatrix}:u,v\in \mathbb{R}\right\}.
    \end{equation}
This differential inclusion (at least locally) is a reformulation of the system \eqref{eq201} coupled with the entropy condition \eqref{eq:entcondition} with inequality replaced by equality (which is reasonable for studying compactness properties; see \cite[Section 7]{KMS03study} for more details). 

The structures of various convex hulls of a set $K \subset \mathbb{R}^{m \times n}$ play a crucial role in determining compactness properties of the differential inclusion $D\psi \in K$. At the heart of the different convexity notions is \emph{quasiconvexity}, which is closely related to the lower semicontinuity of multiple integrals; see \cite{Mo52quasi}. In a nutshell, the compactness properties of a differential inclusion $D\psi \in K$ are characterized by the structure of the quasiconvex hull of $K$. This is central to the compensated compactness method in the pioneering works \cite{tartar1979compensated,tartar1983compensated,DP85compensated}. In general, it is very challenging to characterize the structure of the quasiconvex hull of a set $K\subset\R^{m\times n}$, partly due to the fact that quasiconvex functions are very difficult to understand. Existing general methods for proving compactness of differential inclusions remain very limited; see \cite{Sv93tartar,FS2008tartar}. These works make use of the more transparent \emph{polyconvex} and \emph{rank-one convex} functions to aid in understanding the structure of the relevant quasiconvex hull. Specifically, the related  polyconvex hull and rank-one convex hull serve as upper and lower bounds of the quasiconvex hull, respectively. The reader is referred to \cite[Section 4]{KMS03study} for more detailed information on this topic.

For the set $K_a$ given in \eqref{eq_kspace}, Kirchheim, M\"{u}ller and \v{S}ver\'{a}k raised questions in \cite[Section 7]{KMS03study} concerning the local structures of the polyconvex hull and rank-one convex hull of $K_a$.
In \cite{LP19null}, among other things, the first author and Lorent showed that if the function $a$ is strictly increasing, then the polyconvex hull of $K_a$ is highly nontrivial, i.e., much larger than $K_a$. This opened the possibility that the quasiconvex hull of $K_a$ is also nontrivial, leading to lack of compactness for the differential inclusion \eqref{eq:diffinc} and the system \eqref{eq201} with the entropy condition \eqref{eq:entcondition}. In recent years, a number of striking examples have indicated some connections between lack of compactness and lack of uniqueness; see, e.g., \cite{KMS03study, DLS09euler, DLS10on, CDLK15global}. Consequently, understanding the compactness properties of the differential inclusion \eqref{eq:diffinc} is crucial for gaining insights into weak solutions of the system \eqref{eq201} satisfying the entropy condition \eqref{eq:entcondition}. 

The rank-one convex hull of $K_a$ provides a lower bound for its quasiconvex hull. The natural first step towards understanding the structure of the rank-one convex hull is to explore whether $K_a$ contains \emph{rank-one connections}. A set $K\subset\R^{m\times n}$ contains rank-one connections if there exist $A\ne B\in K$ such that $\mathrm{rank}(A-B) = 1$. If a set $K$ contains rank-one connections, then its rank-one convex hull is nontrivial and one can construct highly oscillatory approximate solutions to the differential inclusion into $K$ with no compactness; see, e.g., \cite[Lemma 3.2]{Ki03rigidity}. Under the assumption that the function $a$ is strictly convex and increasing, DiPerna pointed out in \cite{DP85compensated} that the set $K_a$ has no rank-one connections (an explicit proof was given in \cite[Proposition 4]{LP20on}). The next natural object that produces a nontrivial rank-one convex hull involves a special type of $N$-point configurations, called $T_N$ configurations (see Definition \ref{def_tn} in \S~\ref{ss:TN}). As noted in \cite{KMS03study}, the simplest $T_4$ configurations were discovered independently by a number of authors. Such $T_N$ configurations played fundamental roles in the convex integration construction of wild solutions to elliptic systems \cite{MS03convex,szekelyhidi2004regularity}, and in a large number of subsequent works, including  \cite{CFG11lack,FS2018T5,delellis2021geometric,HT21on,ST23the}. In \cite{LP20on}, the first author and Lorent proved that, if the function $a$ is strictly convex and increasing, then the set $K_a$ given in \eqref{eq_kspace} does not contain $T_4$ configurations. More recently, Johansson and Tione showed in \cite{JT24T5} nonexistence of $T_5$ configurations in the set $K_a$, and Krupa and Sz\'{e}kelyhidi established in \cite{KS24non}  nonexistence of $T_4$ configurations for a large class of sets containing the set $K_a$. Both of these works employed very different methods from those used in \cite{LP20on}. In the present paper, we continue this program by showing that the set $K_a$ does not contain a class of $T_N$ configurations for $N\geq 6$. 

To state our main result, we first introduce some notation. Let $P: \R^2\to \R^{3\times 2}$ be defined by
\begin{equation}
	\label{eq_p}
	P(u,v):=\begin{bmatrix}
		u&v\\a(v)&u\\ua(v)&\frac{u^2}{2}+F(v)
	\end{bmatrix}.
\end{equation}
For a set $\mathcal{X}=\{X_i\}_{i=1}^N\subset K_a$, let $X_i = P(u_i, v_i)$ for $i=1, \dots, N$. We introduce the matrix $A_{\mathcal{X}}\in\R^{3\times 2N}$ to be
 \begin{equation}
	\label{eq_atk}
	A_{\mathcal{X}}:=\begin{bmatrix}
		u_1&\null&u_N&v_1&\null&v_N\\a(v_1)&\cdots&a(v_N)&u_1&\cdots&u_N\\u_1 a(v_1)&\null&u_N a(v_N)&\frac{u_1^2}{2}+F(v_1)&\null&\frac{u_N^2}{2}+F(v_N)
	\end{bmatrix}.
\end{equation}
Further, for a matrix $Q\in\R^{3\times 2}$ with entries $q_{ij}$ for $1\leq i\leq 3, 1\leq j\leq 2$, we introduce the matrix $\Pi^N(Q)\in \R^{3\times 2N}$ to be
 \begin{equation}
	\label{eq_Pi}
	\Pi^{N}(Q):=\begin{bmatrix}
		q_{11}&\null&q_{11}&q_{12}&\null&q_{12}\\q_{21}&\cdots&q_{21}&q_{22}&\cdots&q_{22}\\q_{31}&\null&q_{31}&q_{32}&\null&q_{32}
	\end{bmatrix}.
\end{equation}
Then our main theorem is

\begin{thm}
	\label{thm_tn}
	Suppose $a\in C^2(\R)$ satisfies $a'>0$ and $a''>0$ on $\R$. Let $N\geq 6$ and $\mathcal{X}=\{X_i\}_{i=1}^N\subset K_a$ with $X_i = P(u_i, v_i)$ for $i=1, \dots, N$. If there exists $Q\in \R^{3\times 2}$ such that
	\begin{equation}\label{eq:r2assumption}
	 \mathrm{rank}\lt(A_{\mathcal{X}} - \Pi^N(Q)\rt)=2, 
	 \end{equation}
	 then $\mathcal{X}$ does not have an order that forms a $T_N$ configuration.
\end{thm}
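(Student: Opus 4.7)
The plan is to convert the rank-two hypothesis into explicit algebraic relations on the pairs $(u_i,v_i)$, then combine these with the $T_N$ staircase structure and the hypotheses $a'>0$, $a''>0$ to reach a contradiction when $N\geq 6$. Since $A_{\mathcal{X}}-\Pi^N(Q)$ has rank exactly $2$, its three rows are linearly dependent, so there exists $\xi=(\xi_1,\xi_2,\xi_3)\in\R^3\setminus\{0\}$ with $\xi^T(A_{\mathcal{X}}-\Pi^N(Q))=0$. Reading off the two blocks of columns gives, for every $i$,
\begin{align*}
\xi_1 u_i + \xi_2 a(v_i) + \xi_3\, u_i a(v_i) &= c_1,\\
\xi_1 v_i + \xi_2 u_i + \xi_3\Big(\tfrac{1}{2}u_i^2+F(v_i)\Big) &= c_2,
\end{align*}
where $c_1,c_2$ are the two entries of the row vector $\xi^T Q$.

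Next I would split on whether $\xi_3$ vanishes. When $\xi_3=0$ the system is linear in $u$ and $a(v)$; eliminating $u$ forces an affine relation $a(v_i)=\alpha+\beta v_i$ for all $i$, and by strict convexity $a''>0$ this has at most two solutions, giving $|\{X_i\}|\leq 2$ and ruling out a $T_N$ for $N\geq 3$. When $\xi_3\neq 0$, I normalize $\xi_3=1$ and use the first equation to solve $u_i=(c_1-\xi_2 a(v_i))/(\xi_1+a(v_i))$ away from the at most one root $v_\ast$ of $a(v)=-\xi_1$. Substituting into the second equation produces a single scalar equation $G(v_i)=0$ with $G$ a rational function of $v$ and $a(v)$ plus the antiderivative $F(v)$. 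If $G\not\equiv 0$ on any interval, then $G$ has only finitely many zeros and $N$ is bounded, contradicting $N\geq 6$.

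The main obstacle is the exceptional subcase $G\equiv 0$. Differentiating once and using $F'=a$ reduces this to the ODE $(\xi_1+a(v))^4=(\xi_1\xi_2+c_1)^2\, a'(v)$, whose only $C^2$ solutions with $a'>0$ and $a''>0$ on an interval are the explicit rational family $a(v)=-\xi_1+(\alpha v+\beta)^{-1/3}$, defined on a half-line where $\alpha v+\beta>0$. On such an arc the rank-two condition alone does not bound $N$, and one must exploit the $T_N$ structure. Writing $X_i=P_0+\sum_{j<i}C_j+\kappa_i C_i$ with $\sum_j C_j=0$ and $\kappa_j>1$, the constraint that every column of $X_i-Q$ lies in the $2$-plane $\xi^\perp\subset\R^3$ forces each rank-one $C_j=a_j\otimes b_j$ to satisfy $a_j\in\xi^\perp$. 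Combined with the closure $\sum_j C_j=0$ and the explicit $1$-parameter description of the exceptional arc obtained by substituting the formula for $a$ into the parameterization of $K_a$, this should reduce to a finite algebraic system ruling out $N\geq 6$. I expect most of the technical work to lie in this last step, where the interplay between the staircase closure, the inequality $\kappa_j>1$, and the rigidity of $a$ on the exceptional arc is delicate.
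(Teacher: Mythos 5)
Your reduction of the rank-two hypothesis to the two scalar relations
\begin{align*}
\xi_1 u_i + \xi_2 a(v_i) + \xi_3\, u_i a(v_i) &= c_1,\\
\xi_1 v_i + \xi_2 u_i + \xi_3\bigl(\tfrac{1}{2}u_i^2+F(v_i)\bigr) &= c_2,
\end{align*}
is essentially the same starting point as the paper's system \eqref{eq:rank2system_i'} (the paper normalizes by translating the base point to each $X_i$, which is why its version has no inhomogeneous constants). The $\xi_3=0$ case is handled correctly. But the argument collapses at the generic case $\xi_3\neq 0$, $G\not\equiv 0$: the claim that $G$ then has only finitely many zeros \emph{and hence $N$ is bounded, contradicting $N\geq 6$}, is false. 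Even granting finitely many zeros (which does not follow from $G\not\equiv 0$ for a merely $C^2$ function), ``finitely many'' is not ``fewer than six.'' In fact the paper's Appendix (Lemma \ref{l:example}) constructs an explicit strictly increasing, strictly convex $a$ and nonzero $\lambda_1,\lambda_2$ for which the reduced system \eqref{eq:rank2system} has at least six distinct solutions, i.e.\ a set $\mathcal{X}\subset K_a$ with six points satisfying \eqref{eq:r2assumption}. So the rank-two condition alone cannot exclude $N\geq 6$ points; the theorem only asserts that such a set admits no ordering forming a $T_N$ configuration, and any proof must genuinely invoke a necessary condition for $T_N$ configurations in the generic case — not only in the exceptional arc $G\equiv 0$, where you defer everything to an unproved ``finite algebraic system.''

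Concretely, the missing idea is the following. After translating the base point to $X_i$, the rank-two relation forces every $(h_j^i,r_j^i)$ to solve a system of the form \eqref{eq:rank2system}, and the convexity of $a$ gives quantitative control on the \emph{signs} of the quantities $D_j^i=(h_j^i)^2-r_j^i\,a^i(r_j^i)=\det\bigl((X_j-X_i)^{12}\bigr)$ for solutions of that system (Lemma \ref{l:rank2system}: at most two solutions with $a(t)<\lambda_1$, sign of $s^2-ta(t)$ determined by the position of $a(t)$ relative to $\lambda_1$, and monotonicity of $s^2-ta(t)$ along solutions). Combining this with Sz\'ekelyhidi's necessary condition (Proposition \ref{def_tn_prop}) — that for each $i$ the set $\{\det((X_j-X_i)^{12}):j\neq i\}$ must change sign — the paper orders $v_1<\cdots<v_N$ and runs a case analysis on the signs of $\lambda_1^1,\lambda_1^2,\lambda_1^N$ to exhibit an index $i$ whose row of signs does not change, which is the actual contradiction. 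None of this appears in your proposal, and without it the generic case is not closed. Your observation that the columns of the $C_j$ lie in $\xi^\perp$ is correct but by itself only reproduces the rank-two constraint you already have.
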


Here the assumption on the function $a$ makes the system \eqref{eq201} strictly hyperbolic and genuinely nonlinear. If $a'<0$, i.e., the system \eqref{eq201} is elliptic, then the rank-one convex hull of $K_a$ is locally trivial as a consequence of \cite[Theorem 5]{LP19null}, and thus, at least locally, $K_a$ contains no $T_N$ configurations. On the other hand, if $a'>0$ and $a$ has an isolated inflection point, then the set $K_a$ contains rank-one connections by \cite[Proposition 4]{LP20on} (this was first observed by DiPerna in \cite{DP85compensated}). Further, the assumption \eqref{eq:r2assumption} is strongly motivated by the works \cite{LP20on,KS24non} on $T_4$ configurations. Specifically, in \cite{LP20on}, significant efforts were devoted towards ruling out $T_4$ configurations in $K_a$ satisfying \eqref{eq:r2assumption}. Later, it was observed in \cite[Lemma 4.2]{KS24non} that any $T_4$ configurations in $K_a$, if existed, would have to satisfy the condition \eqref{eq:r2assumption}. 

\begin{rem}
	Under the assumptions of Theorem \ref{thm_tn}, the set $\mathcal{X}$ does not have rank-one connections as a consequence of \cite[Proposition 4]{LP20on}. Further, $ \mathrm{rank}\lt(A_{\mathcal{X}} - \Pi^N(Q)\rt)$ cannot be 1; see the proof of Lemma \ref{l:linindependent} in \S~\ref{ss:prior}. In Appendix~\ref{sec:example}, we provide an explicit example of a function $a$ that satisfies the assumptions in Theorem \ref{thm_tn}, and a set $\mathcal{X}\subset K_a$ with at least 6 points that satisfies the assumption \eqref{eq:r2assumption}; see Lemma \ref{l:example}.
\end{rem}

As in \cite{LP20on}, Theorem \ref{thm_tn} allows us to rule out a special class of three-dimensional $T_N$ configurations (see Remark \ref{r:rank3}), as well as two-dimensional $T_N$ configurations in $K_a$. In particular, we have
\begin{cor}
	\label{thm_general2Dtn}
	Suppose $a\in C^2(\R)$ satisfies $a'>0$ and $a''>0$ on $\R$. Let $N\geq 6$ and $\mathcal{X}=\{X_i\}_{i=1}^N\subset K_a$. If there exists a two-dimensional plane $\mathcal{P}\subset\R^{3\times 2}$ such that $\mathcal{X}\subset K_a\cap \mathcal{P}$, then $\mathcal{X}$ does not have an order that forms a $T_N$ configuration.
\end{cor}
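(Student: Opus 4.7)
The plan is to reduce Corollary~\ref{thm_general2Dtn} to Theorem~\ref{thm_tn} by exhibiting a $Q \in \R^{3\times 2}$ with $\mathrm{rank}(A_\mathcal{X} - \Pi^N(Q)) = 2$. Suppose for contradiction that some ordering makes $\mathcal{X} = \{X_i\}_{i=1}^N$ a $T_N$ configuration with base point $P$ and rank-one matrices $C_1,\dots,C_N$. Set $V := \mathrm{span}\{X_i - X_1 : 2 \le i \le N\}$; the hypothesis $\mathcal{X} \subset \mathcal{P}$ forces $\dim V \le 2$. By standard nondegeneracy properties of $T_N$ configurations, each $C_j$ is a linear combination of the consecutive differences $X_{i+1} - X_i$, so $C_j \in V$ and $V = \mathrm{span}\{C_j\}$; moreover $X_i - P \in V$ since $X_1 - P$ is a scalar multiple of $C_1$. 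The case $\dim V \le 1$ is ruled out at once: then each rank-one $C_j$ is a scalar multiple of a single $D \in V$, forcing $D$ itself to have rank one, so every nonzero $X_i - X_j$ would be rank one, contradicting the absence of rank-one connections in $K_a$ from \cite[Proposition 4]{LP20on}. Hence $\dim V = 2$.

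I would then dichotomize on whether every nonzero element of $V$ has rank $\le 1$. If so, the classical classification of two-dimensional ``rank-one subspaces'' of $\R^{3 \times 2}$ gives either $V = \{z \otimes w_0 : z \in U\}$ for some two-dimensional $U \subset \R^3$ and nonzero $w_0 \in \R^2$, or $V = \{v_0 \otimes w : w \in \R^2\}$ for some nonzero $v_0 \in \R^3$. In either alternative one writes $X_i - P$ in the appropriate rank-one form and matches this entry-by-entry with the explicit columns of $P(\tilde u_i, \tilde v_i) - P$, where $X_i = P(\tilde u_i, \tilde v_i)$. Degenerate sub-cases in which some coordinate of $w_0$ or $v_0$ vanishes immediately collapse $\mathcal{X}$ to a single point via strict monotonicity $a' > 0$. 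The nondegenerate sub-cases reduce to an identity of the form $a(\tilde v_i) = \alpha + \mu \tilde v_i$ for fixed $\alpha, \mu \in \R$; strict convexity $a'' > 0$ then forces $|\{\tilde v_i\}| \le 2$, whence $|\mathcal{X}| \le 2 < N$, a contradiction. I expect this explicit sub-case bookkeeping to be the most technical part of the argument.

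In the remaining case, $V$ is two-dimensional and contains a rank-two matrix. The three $2 \times 2$ minors of a matrix in $\R^{3 \times 2}$ restrict on $V \cong \R^2$ to three homogeneous binary quadratic forms, whose common zero set is a union of at most two lines through the origin. Because the $C_j$'s are rank one and span $V$, exactly two such lines appear; I would pick rank-one generators $R_1 = u_1 w_1^T$ and $R_2 = u_2 w_2^T$ along these lines as a basis of $V$. If $u_1$ and $u_2$ were parallel then $V \subseteq u_1 \otimes \R^2$, returning to the previous case, so $u_1, u_2 \in \R^3$ are linearly independent. Expanding $X_i - P = s_i R_1 + t_i R_2$ shows that both columns of $X_i - P$ lie in $\mathrm{span}(u_1, u_2)$, and therefore every column of $A_\mathcal{X} - \Pi^N(P)$ lies in this two-dimensional subspace of $\R^3$. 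Hence $\mathrm{rank}(A_\mathcal{X} - \Pi^N(P)) \le 2$; this rank is positive (the $X_i$ are distinct) and, by the Remark following Theorem~\ref{thm_tn}, cannot equal $1$, so it equals exactly $2$. Theorem~\ref{thm_tn} now contradicts the hypothesized $T_N$ ordering, completing the proof.
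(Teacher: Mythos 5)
Your proposal is correct, but it takes a genuinely different route from the paper. The paper never verifies the hypothesis \eqref{eq:r2assumption} of Theorem \ref{thm_tn} in the two-dimensional setting; instead it proves Lemma \ref{l:2Dsysi}, using the fact that the two-dimensional space $\mathrm{Span}\{\mathbb{T}_{\mathcal{X}}^i\}$ must contain two distinct rank-one directions to force compatibility relations among the coefficients, thereby deriving the systems \eqref{eq:rank2system_i} or \eqref{eq:rank2system_i2} for every $i$, and then concludes directly from Lemma \ref{l:tn} (the engine behind Theorem \ref{thm_tn}). You instead produce an explicit $Q$ (the base point $P$ of the putative configuration) with $\mathrm{rank}(A_{\mathcal{X}}-\Pi^N(P))=2$: since the direction space $V$ is spanned by the rank-one matrices $C_j$ (via the fact cited as \cite[Eq.\ (20)]{LP20on}, which the paper also uses), and since in the nondegenerate branch $V$ admits two rank-one generators with independent left factors, all columns of $A_{\mathcal{X}}-\Pi^N(P)$ lie in a fixed two-dimensional subspace of $\R^3$; the rank-$1$ and rank-$0$ possibilities are excluded as you say. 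This is a clean reduction to Theorem \ref{thm_tn} as a black box, at the price of invoking the classification of rank-one subspaces and handling an extra degenerate branch; the paper's route avoids that classification and yields the additional first equations in \eqref{eq:rank2system_i}--\eqref{eq:rank2system_i2} (cf.\ the remark that these systems may be overdetermined), but requires reopening the proof of the theorem. One simplification you should make: your first branch, in which every nonzero element of $V$ has rank at most one, needs none of the anticipated entry-by-entry bookkeeping — in that case $X_2-X_1\in V$ would itself be a rank-one connection in $\mathcal{X}\subset K_a$, which is impossible both by \cite[Proposition 4]{LP20on} and because Definition \ref{def_tn} already requires the set to have no rank-one connections, so the branch is vacuous exactly as in your $\dim V\le 1$ case.
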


Our proof of Theorem \ref{thm_tn} builds on many ideas developed in \cite{LP20on}. Here we isolate the most essential ingredients from the $T_4$ case under the assumption \eqref{eq:r2assumption} in \cite{LP20on}, which allows us to extend these ideas to effectively rule out general $T_N$ configurations under the same assumption. Unfortunately, the assumption \eqref{eq:r2assumption} is crucial to our method of proof, and it remains unclear to us whether general $T_N$ configurations exist in the set $K_a$.

Recently, motivated by the Cauchy problem for the system \eqref{eq201}, Krupa considered the differential inclusion $D\psi\in\ti K_a$ in \cite{Kr24finite}, where $\ti K_a:=\left\{\begin{bmatrix}
	u&v\\a(v)&u
\end{bmatrix}\right\}\subset \R^{2\times 2}$, and, among other things, found a certain type of $T_5$ configurations (specifically, large $T_5$ configurations as explored in \cite{FS2018T5}) in the set $\ti K_a$. 

\bigskip

\noindent\textbf{Plan of the article.}
In \S~\ref{sec:prelim}, we introduce the definition of $T_N$ configurations and a necessary condition for $T_N$ configurations, as well as some useful results from \cite{LP20on}. In \S~\ref{sec:main}, we give the proof of Theorem \ref{thm_tn}. In \S~\ref{sec:proof}, we give the proof of Corollary \ref{thm_general2Dtn}. In Appendix~\ref{sec:example}, we provide an explicit example that satisfies the assumptions in Theorem \ref{thm_tn}.
\bigskip

\noindent\textbf{Acknowledgments.}
The authors were supported in part by the NSF grant DMS-2206291. The first author would like to thank Prof. Andrew Lorent, from whom he has learned everything related to the topics of this work.

\section{Preliminaries}\label{sec:prelim}

In this section, we gather some preliminary results that will be crucial for the proofs of Theorem \ref{thm_tn} and Corollary \ref{thm_general2Dtn} in the next two sections. 

\subsection{$T_N$ configurations and a necessary condition}\label{ss:TN}

We begin with the definition of $T_N$ configurations (see, e.g., \cite[Definition 1]{szekelyhidi2005rankone}).
\begin{defin}[$T_N$ configuration]
	\label{def_tn}
	An ordered set of $N\geq 4$ distinct matrices  $\{X_i\}_{i=1}^{N}\subset \mathbb{R}^{m\times n}$ without rank-one connections is said to form a $T_N$ configuration if there exist matrices $P, C_i\in\R^{m\times n}$ and real numbers $\kappa_i>1$ such that 
	\begin{align*}
		%\label{eq1}
			X_1&=P+\kappa_1 C_1,\\
			X_2&=P+C_1+\kappa_2 C_2,\\
			&\vdots \\
			X_N&=P+C_1+\dots +C_{N-1}+\kappa_N C_N,
	\end{align*}
	where $\mathrm{rank}(C_i)=1$ for all $i$ and $\sum_{i=1}^N C_i=0$.
\end{defin}

%\begin{figure}
%	\centering
%	\input{tn.tikz}
%	\caption{A visual representation of a $T_5$ configuration.}
%	\label{fig_tn}
%\end{figure}

%A visual representation of a $T_5$ configuration is shown in Figure \ref{fig_tn}. 
In \cite[Proposition 1]{szekelyhidi2005rankone}, Sz\'{e}kelyhidi provided a very useful algebraic characterization for $T_N$ configurations in $\R^{2\times 2}$. This characterization was extended to general $\R^{m\times n}$ in \cite[Proposition 3.6]{delellis2021geometric}. To avoid introducing unnecessary technical notation, we only state the following simple consequence of the above two results, which is what we need for the proof of Theorem \ref{thm_tn} (see also \cite[Proposition 5]{LP20on} and \cite[Proposition 1.3]{KS24non}).
\begin{prop}[\cite{szekelyhidi2005rankone,delellis2021geometric}]
	\label{def_tn_prop}
	Let $Z$ be a subset of $\{1,2,3\}$ with cardinality two, and for any $M\in\R^{3\times 2}$, let $M^Z$ denote the $2\times2$ submatrix of $M$ formed from the two rows indexed by the elements of $Z$. If a set $\{X_i\}_{i=1}^N\subset \mathbb{R}^{3\times 2}$ forms a $T_N$ configuration and the set $\{X_i^Z\}_{i=1}^N$ does not contain rank-one connections, then for every $i$, 
	\begin{equation}\label{eq:TNnecessary}
		\text{the set }\{\det(X_j^Z-X_i^Z): j\ne i\}\text{ necessarily changes sign}.
	\end{equation}
\end{prop}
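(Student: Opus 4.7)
The plan is to reduce the statement to a purely $2\times 2$ assertion and then invoke the algebraic characterization of $T_N$ configurations in $\R^{2\times 2}$ due to Sz\'{e}kelyhidi. By Definition~\ref{def_tn}, there exist rank-one matrices $C_i\in\R^{3\times 2}$, constants $\kappa_i>1$, and $P\in\R^{3\times 2}$ with $X_i = P + \sum_{k<i} C_k + \kappa_i C_i$ and $\sum_{k=1}^N C_k = 0$. Keeping only the two rows indexed by $Z$ yields the analogous identities
\[
    X_i^Z = P^Z + \sum_{k<i} C_k^Z + \kappa_i C_i^Z, \qquad \sum_{k=1}^N C_k^Z = 0,
\]
in which every $C_k^Z\in\R^{2\times 2}$ has rank at most one.

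The main technical step is to upgrade this decomposition to a genuine $T_N$ configuration in $\R^{2\times 2}$, by showing that each $C_k^Z$ actually has rank exactly one. I would argue by contradiction: if $C_i^Z=0$ for some $i$, then the identity $X_{i+1}^Z - X_i^Z = -(\kappa_i-1)C_i^Z + \kappa_{i+1}C_{i+1}^Z = \kappa_{i+1}C_{i+1}^Z$ would have rank at most one, which by the no-rank-one-connections hypothesis on $\{X_i^Z\}$ forces $C_{i+1}^Z = 0$. The symmetric argument applied to $X_i^Z - X_{i-1}^Z = -(\kappa_{i-1}-1)C_{i-1}^Z$ gives $C_{i-1}^Z=0$. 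Iterating around the cyclic index set produces $C_k^Z\equiv 0$ for every $k$, hence $X_i^Z = P^Z$ for every $i$; but then all differences $X_i^Z - X_j^Z$ vanish, violating the assumption of no rank-one connections (read as $\det(X_i^Z - X_j^Z)\neq 0$ for $i\neq j$). Therefore each $C_k^Z$ has rank one, so $\{X_i^Z\}_{i=1}^N$ meets Definition~\ref{def_tn} in $\R^{2\times 2}$.

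At this point I would invoke \cite[Proposition~1]{szekelyhidi2005rankone} (whose extension to general $\R^{m\times n}$ is \cite[Proposition~3.6]{delellis2021geometric}), which asserts that, in $\R^{2\times 2}$, a set of distinct matrices without rank-one connections that forms a $T_N$ configuration must have, for every fixed $i$, both positive and negative values among the determinants of the pairwise differences from $X_i^Z$. Applied to the projected configuration $\{X_i^Z\}$, this is precisely \eqref{eq:TNnecessary}.

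The principal obstacle is the rank-one-preservation bookkeeping in the second paragraph; once the projected configuration is shown to be non-degenerate, the sign-change conclusion is a black-box consequence of the cited $\R^{2\times 2}$ characterization.
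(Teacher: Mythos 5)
The paper offers no proof of this proposition---it is stated as a direct consequence of the cited characterizations (\cite[Proposition 1]{szekelyhidi2005rankone} in $\R^{2\times 2}$ and its extension \cite[Proposition 3.6]{delellis2021geometric} to $\R^{m\times n}$)---so your attempt to actually derive the $3\times 2$ statement by projecting onto the rows indexed by $Z$ and then invoking only the $2\times 2$ result is a legitimately different and more self-contained route. One caveat on the last step: what you attribute to Sz\'{e}kelyhidi's Proposition 1 is not its literal content. That proposition is an algebraic characterization (existence of a positive vector solving a linear system built from the determinants $\det(X_i-X_j)$), and the sign-change property is a short but genuine further consequence. Since the paper black-boxes exactly this consequence, I will not count that against you, but you are citing the corollary, not the proposition.

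The genuine gap is in your non-degeneracy argument, in two related places. If all $C_k^Z$ vanish, then all $X_i^Z$ coincide, and under the paper's definition a set of coincident matrices contains \emph{no} rank-one connections (rank-one connections are between \emph{distinct} elements $A\ne B$), so there is no contradiction with the stated hypothesis; your parenthetical reading ``$\det(X_i^Z-X_j^Z)\ne 0$ for $i\ne j$'' is strictly stronger than what is assumed. Relatedly, you never verify that the $X_i^Z$ are distinct, which is required both by Definition~\ref{def_tn} and by the $2\times 2$ characterization you want to apply. Both holes are plugged by one observation that you should make explicit: if $X_i^Z=X_j^Z$ for $i\ne j$, then $X_i-X_j$ is supported on the single row outside $Z$, hence has rank at most one, and by distinctness of the original configuration it has rank exactly one---contradicting the absence of rank-one connections in $\{X_i\}\subset\R^{3\times 2}$ from Definition~\ref{def_tn}. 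This gives distinctness of the projected matrices at the outset (and, incidentally, makes your stronger reading of the hypothesis correct a posteriori), and it converts your degenerate case---all $C_k$ supported on the row outside $Z$, hence every difference $X_i-X_j$ of rank one---into a contradiction with the \emph{original} configuration rather than with the hypothesis on the projected one. With that correction the reduction is sound.
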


\subsection{Some notation and results from \cite{LP20on}}\label{ss:prior}

In this subsection, we introduce some notation and results from \cite{LP20on}. 
For a set $\mathcal{X}=\{X_i\}_{i=1}^N\subset K_a$, let $X_i = P(u_i, v_i)$ for $i=1, \dots, N$, where recall that the mapping $P$ is defined in \eqref{eq_p}.
To simplify notation, we denote
\begin{equation}\label{eq:uva}
	\vec u:=(u_1,\dots, u_N), \qd\vec v:= (v_1,\dots, v_N),\qd \vec{\mathfrak{a}}:=(a(v_1),\dots,a(v_N)).
\end{equation}
We first introduce the following necessary condition for $\mathcal{X}$ to contain an order that forms a $T_N$ configuration. 
\begin{lem}\label{l:linindependent}
	Suppose $a\in C^2(\R)$ satisfies $a'>0$ and $a''>0$ on $\R$. Let $N\geq 6$ and $\mathcal{X}=\{X_i\}_{i=1}^N\subset K_a$ with $X_i = P(u_i, v_i)$. If $\mathcal{X}$ has an order that forms a $T_N$ configuration, then $\vec u$ and $\vec v$ are linearly independent, or $\vec u$ and $\vec{\mathfrak{a}}$ are linearly independent.
\end{lem}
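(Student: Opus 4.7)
The plan is to argue by contradiction: assuming both $\vec u, \vec v$ and $\vec u, \vec{\mathfrak{a}}$ are linearly dependent, I will show that $\mathcal{X}$ admits no ordering forming a $T_N$ configuration. The central tool is the necessary condition of Proposition \ref{def_tn_prop} applied with $Z = \{1, 2\}$, for which
\begin{equation*}
X_i^Z = \begin{bmatrix} u_i & v_i \\ a(v_i) & u_i \end{bmatrix}, \qquad \det(X_j^Z - X_i^Z) = (u_j - u_i)^2 - (v_j - v_i)\bigl(a(v_j) - a(v_i)\bigr).
\end{equation*}
The idea is that in each case of linear dependence, either this determinant has constant sign across $j \neq i$ (ruling out simultaneously the existence of rank-one connections in $\{X_i^Z\}$ and the required sign change), or else strict convexity of $a$ forces too few distinct $X_i$.

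First I would dispose of the case $\vec u = 0$. Here distinctness of the $X_i$ together with the injectivity of $v \mapsto P(0, v)$ forces distinct $v_i$, and the determinant above reduces to $-(v_j - v_i)\bigl(a(v_j) - a(v_i)\bigr)$, which is strictly negative for all $j \neq i$ by $a' > 0$. Hence $\{X_i^Z\}$ has no rank-one connections and all the relevant determinants share the same sign, contradicting Proposition \ref{def_tn_prop}.

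In the remaining case $\vec u \neq 0$, linear dependence yields $\vec v = \alpha \vec u$ and $\vec{\mathfrak{a}} = \beta \vec u$ for some $\alpha, \beta \in \R$. The substantive subcase is $\alpha, \beta \neq 0$: then every $v_i$ satisfies $a(v_i) = \beta u_i = (\beta/\alpha) v_i$, so each $v_i$ is a root of $v \mapsto a(v) - (\beta/\alpha) v$. By $a'' > 0$ this function is strictly convex, hence has at most two roots. Since $u_i = v_i/\alpha$, there are at most two distinct $X_i = P(v_i/\alpha, v_i)$, contradicting $N \geq 6$.

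The degenerate subcases $\alpha = 0$ or $\beta = 0$ I expect to handle by direct bookkeeping. If $\alpha = 0$ and $\beta \neq 0$, then $v_i \equiv 0$ gives $\vec{\mathfrak{a}} = a(0)(1, \dots, 1) = \beta \vec u$, forcing $u_i \equiv a(0)/\beta$ and collapsing $\mathcal{X}$ to one point; symmetrically $\beta = 0, \alpha \neq 0$ forces all $v_i$ equal to the unique root of $a$ (if one exists), again collapsing $\mathcal{X}$. The only subcase still requiring Proposition \ref{def_tn_prop} is $\alpha = \beta = 0$ with $a(0) = 0$, where $X_i = P(u_i, 0)$ with distinct $u_i$ and the determinant equals $(u_j - u_i)^2 > 0$. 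The main obstacle is not any single computation but a careful enumeration of these degenerate subcases; the essential ingredients are the strict monotonicity of $a$ (producing a uniform-sign determinant when $\vec u = 0$) and its strict convexity (bounding the number of roots in the main nondegenerate subcase).
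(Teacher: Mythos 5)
Your proof is correct, and the heart of it --- in the nondegenerate case, combining $\vec v = \alpha\vec u$ and $\vec{\mathfrak a} = \beta\vec u$ to get $a(v_i) = (\beta/\alpha)v_i$ and then invoking strict convexity to bound the number of distinct points by $2$ --- is exactly the paper's argument. Where you diverge is in how the degenerate cases are dispatched. The paper quotes \cite[Proposition 3.2]{JT24T5}, which guarantees that any $T_N$ configuration in $K_a$ has non-constant $u$-coordinates and non-constant $v$-coordinates; this makes $\vec u$, $\vec v$ and $\vec{\mathfrak a}$ all nonzero at once, so the proportionality constants are automatically nonzero and the single convexity argument finishes the proof. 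You instead handle $\vec u = 0$ and the subcases $\alpha = 0$ or $\beta = 0$ by hand: either $\mathcal{X}$ collapses to at most one point, or (when $\vec u = 0$, and when $\alpha=\beta=0$ with $a(0)=0$) the determinant $\det(X_j^{12}-X_i^{12}) = (u_j-u_i)^2 - (v_j-v_i)\bigl(a(v_j)-a(v_i)\bigr)$ has a uniform strict sign, which simultaneously rules out rank-one connections in $\{X_i^{12}\}$ and violates the sign-change condition of Proposition \ref{def_tn_prop}. Both sign computations are right ($a'>0$ gives the negative sign when $\vec u=0$; the square gives the positive sign when all $v_i$ coincide), and your enumeration of subcases is exhaustive given that $\vec u\ne 0$ lets you solve for $\vec v$ and $\vec{\mathfrak a}$ as multiples of $\vec u$. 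The trade-off is length versus self-containedness: your version is longer but uses only Proposition \ref{def_tn_prop} and the stated hypotheses on $a$, whereas the paper's is a few lines at the cost of importing an external structural fact about $T_N$ configurations in $K_a$.
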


This lemma is essentially a reformulation of \cite[Lemmas 11-12]{LP20on}. Here for the convenience of the reader, we give a much shorter proof.

\begin{proof}[Proof of Lemma \ref{l:linindependent}]
	From \cite[Proposition 3.2]{JT24T5}, if $\mathcal{X}$ has an order that forms a $T_N$ configuration, then there must exist $i\ne j$ and $k\ne\ell$ such that $u_i\ne u_j$ and $v_k\ne v_\ell$ (the statement for the former clearly follows from the proof of \cite[Proposition 3.2]{JT24T5}). In particular, $\vec u$, $\vec v$ and $\vec{\mathfrak a}$ are all nonzero vectors in $\R^N$. Suppose, by contradiction, that there exist $\alpha, \beta\ne 0$ such that $\vec u = \alpha \vec v$, $\vec{\mathfrak{a}} = \beta \vec u$. Then $\vec{\mathfrak{a}} = \alpha\beta\vec v$, i.e., $a(v_i) = \alpha\beta v_i$ for all $i=1,\dots, N$. Since $a$ is strictly convex, the equation $a(t) = \alpha \beta t$ has at most two distinct solutions, say $t_1<t_2$. Then $(u_i, v_i)\in\{(\alpha t_1, t_1), (\alpha t_2, t_2)\}$, which would imply $\mathrm{Card}(\mathcal{X})\leq 2$ and contradict to $N\geq 6$.
\end{proof}

Our strategy of the proof of Theorem \ref{thm_tn} is to show that there exists $i$ such that the condition \eqref{eq:TNnecessary} is violated. To this end, for every $i=1,\dots, N$, it is convenient to consider an equivalent problem which essentially translates $X_i$ to the zero matrix. Specifically, we denote
\begin{equation}\label{eq:hiri}
	h_j^i:= u_j-u_i, \quad r_j^i:= v_j - v_i\qquad\text{ for }j=1, \dots, N.  
\end{equation}
Further, for $i=1, \dots, N$, define
\begin{equation}\label{eq:aiFi}
	a^i(t) := a(t+v_i) - a(v_i),\qd F^i(t):= F(t+v_i) - F(v_i) - a(v_i)t.  
\end{equation}
Since $a'>0$, $a''>0$ and $F' = a$, it is clear that
\begin{equation}\label{eq:aiproperty}
	a^i(0)=0, \qd (a^i)' >0, \qd (a^i)''>0,
\end{equation}
and
\begin{equation}\label{eq:Fiproperty}
	(F^i)' = a^i, \qd F^i(0)=(F^i)'(0)=0. %\qd (F^i)'' = (a^i)'>0.
\end{equation}
Lastly, define
\begin{equation}\label{eq:TXi}
	\mathbb{T}_{\mathcal{X}}^i := \{T^i_1,\dots,T^i_N\}\subset \R^{3\times 2},
\end{equation}
where
\begin{equation*}
	%\label{eq_pi}
	T^i_j:=\begin{bmatrix}
		h_j^i&r_j^i\\a^i(r_j^i)&h_j^i\\h_j^ia^i(r_j^i)&\frac{(h_j^i)^2}{2}+F^i(r_j^i)
	\end{bmatrix}.
\end{equation*}
The following lemma is essentially \cite[Lemma 9]{LP20on}, whose proof trivially generalizes to $T_N$ configurations.

\begin{lem}[$\text{\cite[Lemma 9]{LP20on}}$]\label{l:TXi}
	Let $\mathcal{X}=\{X_i\}_{i=1}^N\subset K_a$ with $X_i = P(u_i, v_i)$. If $\mathcal{X}$ forms a $T_N$ configuration, then $\mathbb{T}_{\mathcal{X}}^i$ also forms a $T_N$ configuration (not necessarily in $K_a$) for every $i=1,\dots,N$.
\end{lem}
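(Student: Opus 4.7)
The plan is to realize the passage $X_j \mapsto T_j^i$ as the composition of a translation by $-X_i$ with an invertible left multiplication on $\R^{3\times 2}$, both of which preserve the class of $T_N$ configurations. First I would write out $X_j - X_i$ entrywise: the first two rows immediately match those of $T_j^i$ after invoking $a(v_j) - a(v_i) = a^i(r_j^i)$, while for the bottom row, expanding $u_j a(v_j) - u_i a(v_i)$ and $(u_j^2 - u_i^2)/2 + F(v_j) - F(v_i)$ with $u_j = u_i + h_j^i$ and $v_j = v_i + r_j^i$ produces exactly the $(3,1)$ and $(3,2)$ entries of $T_j^i$ plus the cross-terms $a(v_i) h_j^i + u_i a^i(r_j^i)$ and $u_i h_j^i + a(v_i) r_j^i$ respectively.

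The key observation is then that these cross-terms are precisely the correction that would be subtracted by left-multiplying with $B_i := \begin{bmatrix} 1 & 0 & 0 \\ 0 & 1 & 0 \\ -a(v_i) & -u_i & 1 \end{bmatrix}$. A direct check confirms $T_j^i = B_i(X_j - X_i)$ for every $j$. Since $\det B_i = 1$, left multiplication by $B_i$ is an invertible linear self-map of $\R^{3\times 2}$ which preserves the rank of every matrix.

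The conclusion should then follow formally. If $\mathcal{X}$ forms a $T_N$ configuration with base point $P$, rank-one matrices $C_1, \ldots, C_N$ summing to zero, and constants $\kappa_1, \ldots, \kappa_N > 1$, set $P^i := B_i(P - X_i)$ and $C_k^i := B_i C_k$. Each $C_k^i$ has rank one by rank preservation, $\sum_k C_k^i = 0$ by linearity, and the identities $T_j^i = P^i + C_1^i + \cdots + C_{j-1}^i + \kappa_j C_j^i$ hold by linear expansion of the corresponding identity for $X_j - X_i$. Finally, any rank-one connection $T_j^i - T_\ell^i = B_i(X_j - X_\ell)$ would pull back to a rank-one connection among the $X_j$'s by invertibility of $B_i$, which is excluded since $\mathcal{X}$ is assumed to form a $T_N$ configuration.

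I do not anticipate a genuine obstacle: once $B_i$ is identified, every remaining step is purely formal and does not depend on $N$. This is precisely why the argument of \cite[Lemma 9]{LP20on}, established for $N=4$, extends verbatim to arbitrary $N$.
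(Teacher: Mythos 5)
Your proposal is correct: the identity $T_j^i = B_i(X_j - X_i)$ with $\det B_i = 1$ is exactly the content of the paper's computations \eqref{eq:ha}--\eqref{eq:hF} (the ``elementary row operations'' invoked in \S 3), and the observation that a translation followed by an invertible linear map preserves distinctness, rank-one matrices, the absence of rank-one connections, and the defining relations of Definition \ref{def_tn} is the same mechanism underlying \cite[Lemma 9]{LP20on}. Nothing in the argument depends on $N$, which is precisely why the paper asserts the proof ``trivially generalizes.''
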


\begin{rem}\label{r:linindependent}
	Since the function $a^i$ also satisfies $(a^i)'>0$ and $(a^i)''>0$ (see \eqref{eq:aiproperty}), the proof of Lemma \ref{l:linindependent} applies to the set $\mathbb{T}_{\mathcal{X}}^i$. That is, denoting 
	\begin{equation*}
		\vec h^i := (h_1^i,\dots, h_N^i),\qd {\vec r}\,^i := (r_1^i,\dots, r_N^i),\qd \vec{\mathfrak{a}}\,^i:= (a^i(r_1^i),\dots,a^i(r_N^i)),
	\end{equation*}
	if $\mathbb{T}_{\mathcal{X}}^i$ has an order that forms a $T_N$ configuration, then $\vec h^i$ and ${\vec r}\,^i$ are linearly independent, or ${\vec{h}}^i$ and $\vec{\mathfrak{a}}\,^i$ are linearly independent. 
\end{rem}

The proof of Theorem \ref{thm_tn} involves carefully analyzing the structure of solutions to the system of equations given by
\begin{equation}\label{eq:rank2system}
	\begin{cases}
		&s a(t) = \lambda_1 s + \lambda_2 a(t),\\
		&\frac{s^2}{2}+ F(t) = \lambda_1 t + \lambda_2 s.
	\end{cases}
\end{equation} 
Here $s$ and $t$ are the unknowns and $\lambda_1, \lambda_2$ are two given constants. This system of equations arises as a consequence of the assumption \eqref{eq:r2assumption}, and has been studied in detail in \cite{LP20on}. First, we need the following lemma which deals with the above system \eqref{eq:rank2system} in the degenerate cases. This is covered in \cite[Lemma 19]{LP20on}, which requires additional efforts to deal with the particular $T_4$ case. For $T_N$ with $N>4$, we only need the following version, whose short proof is provided for the reader's convenience.
\begin{lem}[$\text{\cite[Lemma 19]{LP20on}}$]\label{l:degenerate}
	Suppose $a\in C^2(\R)$ satisfies $a'>0$ and $a''>0$ on $\R$, and $F\in C^3(\R)$ satisfies $F' = a$. In addition, assume $a(0) = F(0) = 0$. Then the system \eqref{eq:rank2system} has at most 4 distinct solutions if $\lambda_1=0$ or $\lambda_2=0$. 
\end{lem}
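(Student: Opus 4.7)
The plan is to treat the two degenerate cases separately and exploit the fact that the first equation of \eqref{eq:rank2system} factors in each case, leaving two subproblems controlled by the strict monotonicity of $a$ (hence of $F$).

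\textbf{Case $\lambda_1 = 0$.} The first equation rewrites as $(s-\lambda_2)a(t)=0$. Since $a$ is strictly increasing with $a(0)=0$, the equation $a(t)=0$ has the unique solution $t=0$. Hence every solution $(s,t)$ satisfies $s=\lambda_2$ or $t=0$.
\begin{itemize}
\item If $s=\lambda_2$, the second equation becomes $F(t)=\lambda_2^2/2$. Because $F'=a>0$, $F$ is strictly increasing, so this gives at most one $t$.
\item If $t=0$, using $F(0)=0$, the second equation reduces to $s^2/2-\lambda_2 s=0$, yielding at most two values $s=0,\,2\lambda_2$.
\end{itemize}
This subcase produces at most $3$ distinct solutions, well within the desired bound.

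\textbf{Case $\lambda_2 = 0$.} Now the first equation factors as $s(a(t)-\lambda_1)=0$. By strict monotonicity of $a$, the equation $a(t)=\lambda_1$ has at most one solution, which I denote $t_0$ (if it exists at all). Every solution satisfies $s=0$ or $t=t_0$.
\begin{itemize}
\item If $s=0$, the second equation becomes $g(t):=F(t)-\lambda_1 t=0$. Then $g'(t)=a(t)-\lambda_1$ is strictly increasing, so $g'$ has at most one zero; consequently $g$ is strictly monotone on each side of that critical point, and hence vanishes at most twice.
\item If $t=t_0$, the second equation reduces to $s^2/2=\lambda_1 t_0-F(t_0)$, which gives at most two values of $s$.
\end{itemize}
Summing gives at most $4$ distinct solutions.

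There is no real obstacle in this proof: the entire argument is an accounting exercise exploiting $a'>0$ and $F'=a>0$. The mildest subtlety is making sure one counts distinct pairs $(s,t)$ across the two subcases in the $\lambda_2=0$ case rather than double-counting; since the first subcase produces solutions with $s=0$ but (generically) varying $t$, while the second produces solutions with $t=t_0$ but varying $s$, the overlap set consists of at most a single point $(0,t_0)$, which does not worsen the bound of $4$. This bound is in fact sharp for $\lambda_2=0$, which is why the lemma stops at $4$ rather than a smaller number.
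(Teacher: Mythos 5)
Your decomposition is exactly the paper's: in each degenerate case the first equation factors, and you count solutions along the two resulting branches. Case~2 is handled correctly and matches the paper's argument. However, there is a false step in Case~1: you assert that ``$F'=a>0$, so $F$ is strictly increasing,'' and conclude that $F(t)=\lambda_2^2/2$ has at most one solution. Under the hypotheses we only know $a'>0$ and $a(0)=0$, so $a<0$ on $(-\infty,0)$ and $a>0$ on $(0,\infty)$; thus $F$ is \emph{not} monotone --- it is strictly convex with a global minimum $F(0)=0$, and the equation $F(t)=\lambda_2^2/2$ can have \emph{two} distinct solutions (one negative, one positive) whenever $\lambda_2\neq 0$. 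The correct justification, which is the one the paper uses, is $F''=a'>0$, giving at most two solutions on this branch. This does not sink the lemma: the corrected count for Case~1 is at most $2+2=4$, still within the stated bound (your count of $3$ was too optimistic). Curiously, you apply precisely the right convexity argument to the function $t\mapsto F(t)-\lambda_1 t$ in Case~2; the same reasoning should have been applied to $F$ itself in Case~1.
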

\begin{proof}%[Proof of Lemma \ref{l:degenerate}]
	We copy the system \eqref{eq:rank2system} here:
	\begin{equation}\label{eq:rank2system1}
		s a(t) = \lambda_1 s + \lambda_2 a(t),
	\end{equation} 
	\begin{equation}\label{eq:rank2system2}
		\frac{s^2}{2}+ F(t) = \lambda_1 t + \lambda_2 s.
	\end{equation}
	
	\emph{Case 1.} Suppose $\lambda_1=0$. Then \eqref{eq:rank2system1} reduces to $sa(t) = \lambda_2 a(t)$. If $a(t)\ne 0$, then $s=\lambda_2$. Plugging this into \eqref{eq:rank2system2}, we obtain $F(t) = \lambda_2^2/2$. Since $F'' = a' >0$, this equation has at most two distinct solutions. If $a(t)=0$, or equivalently $t=0$, then \eqref{eq:rank2system2} reduces to $s^2/2 = \lambda_2 s$, which also has at most two distinct solutions. Combining the two cases, we know the system \eqref{eq:rank2system} has at most four distinct solutions. 
	\medskip
	
	\emph{Case 2.} Suppose $\lambda_2=0$. Then \eqref{eq:rank2system1} reduces to $sa(t) = \lambda_1 s$. If $s\ne 0$, then $a(t)=\lambda_1$, which has at most one solution, say $t_0$, as $a'>0$. Plugging this into \eqref{eq:rank2system2}, we obtain $s^2/2 = a(t_0)t_0 - F(t_0)$, which has at most two distinct solutions for $s$. If $s=0$, then \eqref{eq:rank2system2} reduces to $F(t) = \lambda_1 t$, which also has at most two distinct solutions since the function $t\mapsto F(t)-\lambda_1t$ is strictly convex. Hence, the system \eqref{eq:rank2system} has at most four distinct solutions in this case as well.
\end{proof}

Finally, we put together the relevant results from \cite{LP20on} concerning the system \eqref{eq:rank2system} in the nondegenerate cases.

\begin{lem}[$\text{\cite[Lemmas 20-22]{LP20on}}$]\label{l:rank2system}
	Let $\lambda_1$ and $\lambda_2$ be two nonzero constants. Suppose $a\in C^2(\R)$ satisfies $a'>0$ and $a''>0$ on $\R$, and $F\in C^3(\R)$ satisfies $F' = a$. In addition, assume $a(0) = F(0) = 0$. Then:
	
	\begin{enumerate}[(a)]
		\item The system \eqref{eq:rank2system} has at most two distinct solutions satisfying $a(t)<\lambda_1$.
		\item Let $(s, t)$ be a nontrivial solution of the system \eqref{eq:rank2system}, i.e., $(s,t)\ne (0,0)$, with $s^2-ta(t)\neq0$. If $\lambda_1>0$ and $a(t)<\lambda_1$, then $s^2-ta(t)>0$; on the other hand, if $\lambda_1<0$ and $a(t)>\lambda_1$, then $s^2-ta(t)<0$.
		\item Suppose $\lambda_1>0$. If $(s_1,t_1)$ and $(s_2,t_2)$ are two nontrivial solutions of the system \eqref{eq:rank2system} with $\lambda_1<a(t_1)<a(t_2)$, then $s_1^2-t_1a(t_1)>s_2^2-t_2a(t_2)$. 
	\end{enumerate}
\end{lem}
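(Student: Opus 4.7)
The plan is to eliminate $s$ from the system using the first equation: for $a(t) \ne \lambda_1$ this gives $s = \lambda_2 a(t)/(a(t)-\lambda_1)$, and substituting into the second equation and clearing denominators produces the scalar equation
\begin{equation*}
\Phi(t) = c, \qquad \Phi(t) := q(t)^2 H(t), \qquad c := \tfrac{\lambda_1^2 \lambda_2^2}{2} > 0,
\end{equation*}
where $q(t) := a(t) - \lambda_1$ and $H(t) := \lambda_1 t + \tfrac{\lambda_2^2}{2} - F(t)$. Solutions of the system correspond bijectively to roots of $\Phi = c$ with $a(t) \ne \lambda_1$. The function $H$ is strictly concave with $H' = -q$, attaining its global maximum at the unique $t^*$ with $a(t^*) = \lambda_1$, and every root of $\Phi = c$ satisfies $H(t) > 0$.

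For (a), restrict attention to $t < t^*$, where $q < 0$. One computes $\Phi'(t) = q(t)\bigl(2 a'(t) H(t) - q(t)^2\bigr)$, so critical points coincide with zeros of $\chi(t) := 2 a'(t) H(t) - q(t)^2$. On $\{t < t^*,\, H > 0\}$, the derivative $\chi'(t) = 2 a''(t) H(t) - 4 q(t) a'(t)$ is strictly positive (both terms positive, since $a'' > 0$, $H > 0$, $q < 0$, $a' > 0$), so $\chi$ is strictly increasing and $\Phi$ has at most one critical point there. Hence $\Phi = c$ has at most two roots on this set; on its complement in $(-\infty, t^*)$ one has $\Phi \le 0 < c$, excluding further roots. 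This gives (a).

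For (b) and (c), derive a convenient formula for $s^2 - t a(t)$ at nontrivial solutions. Since $\lambda_1 \ne 0$, the first equation forces $s = 0$ whenever $a(t) = 0$, so nontrivial solutions have $t \ne 0$ and $a(t) \ne 0$. Using $\lambda_2 = s q/a$ (from the first equation) in the second equation and simplifying yields
\begin{equation*}
s^2 - t a(t) \;=\; \frac{a(t)\bigl(t a(t) - 2 F(t)\bigr)}{2\lambda_1 - a(t)}, \qquad a(t) \ne 2\lambda_1.
\end{equation*}
The auxiliary function $g(t) := t a(t) - 2 F(t)$ satisfies $g(0) = g'(0) = 0$ and $g''(t) = t a''(t)$, from which a short argument (using $a'' > 0$) gives $\mathrm{sign}(g(t)) = \mathrm{sign}(t)$ for $t \ne 0$, so $a(t)\,g(t) > 0$ for $t \ne 0$. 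In the cases of (b), either $a < \lambda_1 < 2\lambda_1$ (when $\lambda_1 > 0$) or $2\lambda_1 < \lambda_1 < a$ (when $\lambda_1 < 0$); in both cases $a \ne 2\lambda_1$ and $\mathrm{sign}(2\lambda_1 - a) = \mathrm{sign}(\lambda_1)$, so the displayed identity yields $\mathrm{sign}(s^2 - t a(t)) = \mathrm{sign}(\lambda_1)$, which is (b).

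For (c), the formula above is inconclusive since $\mathrm{sign}(2\lambda_1 - a)$ may flip in the region $a > \lambda_1$. Instead, use the constraint $\Phi(t) = c$ to eliminate $\lambda_2^2$ and write, at any solution,
\begin{equation*}
s^2 - t a(t) \;=\; \frac{2 c\, a(t)^2}{\lambda_1^2\, q(t)^2} - t a(t) =: \widetilde{\psi}(t),
\end{equation*}
where $\widetilde{\psi}$ is a smooth function of $t$ alone on $\{q \ne 0\}$. Direct differentiation gives
\begin{equation*}
\widetilde{\psi}'(t) = -\frac{4 c\, a(t) a'(t)}{\lambda_1\, q(t)^3} - a(t) - t a'(t).
\end{equation*}
Under (c), $\lambda_1 > 0$ and $a(t) > \lambda_1 > 0$ force $t > t^* > 0$; hence $a, a', q, t$ are all positive, making each term of $\widetilde{\psi}'$ strictly negative. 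Thus $\widetilde{\psi}$ is strictly decreasing on this region, and for the given solutions $t_1 < t_2$ one concludes $s_1^2 - t_1 a(t_1) > s_2^2 - t_2 a(t_2)$, proving (c). The main obstacle is identifying the right reformulations: the explicit-in-$t$ formula in (b), and the level-set-enriched formula in (c) that absorbs the $\lambda_2$-dependence into a clean one-variable monotonicity in $t$; careful tracking of the excluded points $a \in \{0, \lambda_1, 2\lambda_1\}$ is also required.
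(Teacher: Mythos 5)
Your proof is correct. Note first that the paper does not reprove this lemma --- it imports it from \cite{LP20on} --- so the only internal point of comparison is the appendix, which uses the same elimination you start from: solving the first equation of \eqref{eq:rank2system} for $s$ (legitimate since $a(t)=\lambda_1$ is impossible when $\lambda_1,\lambda_2\neq 0$) and reducing to the scalar equation \eqref{eqp33}, i.e.\ your $\Phi(t)=q(t)^2H(t)=c$. For part (a) your route differs in detail from the one the appendix suggests: there, one keeps the equation in the form $p(v)=q(v)$ and observes that on $\{a<\lambda_1\}$ the left side is strictly convex ($p''=a'>0$) while the right side is strictly concave ($q''<0$ because $a''(a-\lambda_1)-3(a')^2<0$ there), so at most two intersections occur --- a one-line count. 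Your version clears denominators and counts critical points of $\Phi$ via the monotonicity of $\chi=2a'H-q^2$ on $\{t<t^*,\,H>0\}$; this is slightly longer but equally valid, and the restriction to the interval where $H>0$ (forced by $\Phi=c>0$) is handled correctly. For (b), the identity $(s^2-ta)(2\lambda_1-a)=a\bigl(ta-2F\bigr)$ does follow from multiplying the second equation by $a$ and substituting $\lambda_2 a=s(a-\lambda_1)$, and the sign analysis of $g(t)=ta(t)-2F(t)$ via $g(0)=g'(0)=0$, $g''=ta''$ is right; as you note, in the two regimes of (b) one automatically has $a\neq 2\lambda_1$ and $\operatorname{sign}(2\lambda_1-a)=\operatorname{sign}(\lambda_1)$, so the hypothesis $s^2-ta\neq 0$ is not even needed. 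For (c), writing $s^2-ta$ as the one-variable function $\widetilde\psi(t)=\lambda_2^2a^2/q^2-ta$ and checking $\widetilde\psi'<0$ term by term on $\{a>\lambda_1\}$ (where $t>t^*>0$ and $q>0$) is a clean monotonicity argument; the computation $\frac{d}{dt}(a^2/q^2)=-2\lambda_1 aa'/q^3$ checks out. The only cosmetic quibble is that in (c) you do not actually need the level-set constraint $\Phi=c$ to ``eliminate $\lambda_2^2$'' --- the substitution $s=\lambda_2 a/q$ alone already gives $\widetilde\psi$ --- but this does not affect correctness.
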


In Lemma \ref{l:example} in Appendix~\ref{sec:example}, we provide an explicit example of a system in the form of \eqref{eq:rank2system} that has at least 6 distinct solutions.

\section{Proof of Theorem \ref{thm_tn}}\label{sec:main}

Recalling the notation from \eqref{eq_atk}-\eqref{eq_Pi}, for every $i=1, \dots, N$, the matrix $A_{\mathcal{X}} - \Pi^N(X_i)$ is obtained by subtracting the $i$-th column and the $(i+N)$-th column of the matrix $A_{\mathcal{X}} - \Pi^N(Q)$ from its first $N$ columns and last $N$ columns, respectively. Equivalently, $A_{\mathcal{X}} - \Pi^N(X_i)$ is obtained by performing elementary column operations on $A_{\mathcal{X}} - \Pi^N(Q)$ and setting the $i$-th and $(i+N)$-th columns of $A_{\mathcal{X}} - \Pi^N(Q)$ to zero. Hence, the assumption \eqref{eq:r2assumption} implies
\begin{equation}\label{eq:r2i}
	 \mathrm{rank}\lt(A_{\mathcal{X}} - \Pi^N(X_i)\rt)\leq 2\qquad\forall i=1,\dots, N.
\end{equation}
Recalling the definitions for $h_j^i$, $r_j^i, a^i$ and $F^i$ in \eqref{eq:hiri}-\eqref{eq:aiFi}, it can be checked directly that
\begin{align}\label{eq:ha}
	h_j^i a^i(r_j^i) &= \lt(u_ja(v_j)-u_i a(v_i)\rt) - a(v_i)\lt(u_j-u_i\rt) - u_i\lt(a(v_j)-a(v_i)\rt)\nn\\
	&=\lt(u_ja(v_j)-u_i a(v_i)\rt) -a(v_i)h_j^i - u_i a^i(r_j^i),
\end{align}
\begin{align}\label{eq:hF}
	\frac{(h_j^i)^2}{2}+F^i(r_j^i) &= \lt(\frac{u_j^2}{2}+F(v_j)-\frac{u_i^2}{2}-F(v_i)\rt) - a(v_i)\lt(v_j-v_i\rt) - u_i\lt(u_j-u_i\rt)\nn\\
	&= \lt(\frac{u_j^2}{2}+F(v_j)-\frac{u_i^2}{2}-F(v_i)\rt) - a(v_i)r_j^i - u_i h_j^i.
\end{align}
Therefore, performing elementary row operations on $A_{\mathcal{X}} - \Pi^N(X_i)$ leads to the matrix
 \begin{equation*}
	%\label{eq_Ai}
	A_{\mathcal{X}}^i:=\begin{bmatrix}
		h_1^i&\null&h_{N}^i&r_1^i&\null&r_{N}^i\\a^i(r_1^i)&\cdots&a^i(r_{N}^i)&h_1^i&\cdots&h_{N}^i\\h_1^ia^i(r_1^i)&\null&h_{N}^ia^i(r_{N}^i)&\frac{(h_1^i)^2}{2}+F^i(r_1^i)&\null&\frac{(h_{N}^i)^2}{2}+F^i(r_{N}^i)
	\end{bmatrix},
\end{equation*}
and \eqref{eq:r2i} implies 
\begin{equation*}
	\mathrm{rank}\lt(A_{\mathcal{X}}^i\rt)\leq 2\qquad\forall i=1,\dots, N.
\end{equation*}

In view of Lemma \ref{l:TXi} and Remark \ref{r:linindependent}, we may assume that $\mathrm{rank}\lt(A_{\mathcal{X}}^i\rt)=2$ for every $i=1,\dots, N$, and there exist constants $\lambda_1^i, \lambda_2^i$ such that
	\begin{equation}\label{eq:rank2system_i'}
		\begin{cases}
			&h_j^i a^i(r_j^i) = \lambda_1^i h_j^i + \lambda_2^i a^i(r_j^i),\\
			&\frac{(h_j^i)^2}{2}+ F^i(r_j^i) = \lambda_1^i r_j^i + \lambda_2^i h_j^i,
		\end{cases}\qquad\text{ for all }j=1, \dots, N.
	\end{equation} 
Now Theorem \ref{thm_tn} follows directly from the following lemma.

\begin{lem}\label{l:tn}
	Suppose $a\in C^2(\R)$ satisfies $a'>0$ and $a''>0$ on $\R$. Let $N\geq 6$ and $\mathcal{X}=\{X_i\}_{i=1}^N\subset K_a$ with $X_i = P(u_i, v_i)$. Assume in addition that for every $i=1, \dots, N$, there exist constants $\lambda_1^i, \lambda_2^i$ such that the system \eqref{eq:rank2system_i'} holds. Then $\mathcal{X}$ does not have an order that forms a $T_N$ configuration.
\end{lem}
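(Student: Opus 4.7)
The plan is to apply Proposition \ref{def_tn_prop} with $Z=\{1,2\}$. Using the system \eqref{eq:rank2system_i'}, for any $j\ne i$ the third row of $X_j-X_i$ equals $(\lambda_1^i + a(v_i))$ times its first row plus $(\lambda_2^i + u_i)$ times its second row; hence each of the three $2\times 2$ minors of $X_j-X_i$ is a scalar multiple of
\[
D_{ij} := (h_j^i)^2 - r_j^i a^i(r_j^i) = (u_j-u_i)^2 - (v_j-v_i)(a(v_j)-a(v_i)),
\]
namely $\det(X_j^{\{1,2\}}-X_i^{\{1,2\}})=D_{ij}$, $\det(X_j^{\{1,3\}}-X_i^{\{1,3\}})=(\lambda_2^i+u_i)D_{ij}$, and $\det(X_j^{\{2,3\}}-X_i^{\{2,3\}})=-(\lambda_1^i+a(v_i))D_{ij}$. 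Under the $T_N$ hypothesis $\mathcal{X}$ has no rank-one connections, so $X_j-X_i$ has rank~$2$ and these three minors cannot all vanish; this forces $D_{ij}\ne 0$ for every $i\ne j$. In particular $\{X_j^{\{1,2\}}\}$ has no rank-one connections, and it suffices to find one index $i$ for which $\{D_{ij}:j\ne i\}$ has constant sign, since Proposition \ref{def_tn_prop} would then fail.

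First, Lemma \ref{l:degenerate} combined with $N\geq 6$ forces $\lambda_1^i\ne 0$ and $\lambda_2^i\ne 0$ for every $i$, and inspection of the first equation of \eqref{eq:rank2system_i'} makes $\{u_i\}$ and $\{v_i\}$ sets of pairwise distinct reals. Let $i^*$ and $i_{**}$ be the unique indices achieving $\max_j v_j$ and $\min_j v_j$. Every $j\ne i^*$ has $a^{i^*}(r_j^{i^*})<0$, so $\lambda_1^{i^*}>0$ would place all $N-1\geq 5$ nontrivial solutions in $\{a^{i^*}(r)<\lambda_1^{i^*}\}$, contradicting Lemma \ref{l:rank2system}(a); hence $\lambda_1^{i^*}<0$ and Lemma \ref{l:rank2system}(b) yields $D_{i^* j}<0$ whenever $a^{i^*}(r_j^{i^*})>\lambda_1^{i^*}$. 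A symmetric argument gives $\lambda_1^{i_{**}}>0$, and since every $j\ne i_{**}$ has $a^{i_{**}}(r_j^{i_{**}})>0$, Lemma \ref{l:rank2system}(c) shows that $D_{i_{**},j}$ strictly decreases in $a(v_j)$ along the high region at $i_{**}$, attaining its minimum at $j=i^*$. If $D_{i_{**},i^*}>0$ held, combining with Lemma \ref{l:rank2system}(b) on the low region at $i_{**}$ would make every $D_{i_{**},j}$ positive, contradicting sign change at $i_{**}$. Therefore $D_{i^*,i_{**}}=D_{i_{**},i^*}<0$.

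Returning to $i^*$, suppose $i_{**}$ were in the high region there. Since $a(v_{i_{**}})$ is the smallest among $\{a(v_j):j\ne i^*\}$, every other $j$ would also be in the high region, forcing every $D_{i^* j}<0$ and violating sign change. Thus $i_{**}$ lies in the low region at $i^*$, which by Lemma \ref{l:rank2system}(a) contains at most two nontrivial elements. If $\{i_{**}\}$ alone exhausts it, every $D_{i^* j}<0$ again. The remaining case is that the low region at $i^*$ equals $\{i_{**},j^\dagger\}$, where $j^\dagger$ must be the index with the second smallest $v$-coordinate, and sign change at $i^*$ compels $D_{i^*,j^\dagger}>0$.

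Finally, I would analyze $j^\dagger$. Since $v_{j^\dagger}$ is the second smallest, the only $k\ne j^\dagger$ with $a^{j^\dagger}(r_k^{j^\dagger})<0$ is $k=i_{**}$; every other $k$ satisfies $a^{j^\dagger}(r_k^{j^\dagger})>0$. If $\lambda_1^{j^\dagger}<0$, then $i^*$ lies in the high region at $j^\dagger$, so Lemma \ref{l:rank2system}(b) forces $D_{j^\dagger,i^*}<0$, contradicting $D_{j^\dagger,i^*}=D_{i^*,j^\dagger}>0$. If $\lambda_1^{j^\dagger}>0$, then Lemma \ref{l:rank2system}(a), together with the trivial solution being in the low region, makes $i_{**}$ the only nontrivial solution in the low region at $j^\dagger$; every $k\ne j^\dagger,i_{**}$ (in particular $i^*$) is in the high region, where Lemma \ref{l:rank2system}(c) identifies $D_{j^\dagger,i^*}$ as the smallest value of $D_{j^\dagger,k}$; combining $D_{j^\dagger,i^*}>0$ with $D_{j^\dagger,i_{**}}>0$ (from Lemma \ref{l:rank2system}(b)) makes every $D_{j^\dagger,k}$ positive, again contradicting sign change. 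The main obstacle, as I anticipate it, is precisely this last step: the low-region case at $i^*$ cannot be handled from $i^*$ alone and requires descending one level to the second-extremal index $j^\dagger$, reapplying the extremal analysis there.
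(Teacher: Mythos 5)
Your proposal follows essentially the same route as the paper: reduce via Proposition \ref{def_tn_prop} with $Z=\{1,2\}$ to a sign analysis of the symmetric quantities $D_{ij}=(u_j-u_i)^2-(v_j-v_i)(a(v_j)-a(v_i))$ (the paper's $D^i_j$), use Lemma \ref{l:degenerate} to force $\lambda_1^i,\lambda_2^i\ne0$, and then exploit Lemma \ref{l:rank2system}(a)--(c) at the three extremal rows in the $v$-ordering (your $i_{**},j^\dagger,i^*$ are the paper's rows $1,2,N$); the bookkeeping is organized slightly differently (the paper splits first on the signs of $\lambda_1^1,\lambda_1^2$ and then inspects row $N$, you start from the two $v$-extremes and descend to the second-smallest index), but the ingredients and the final contradiction are the same.

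Two points need repair, though neither changes the architecture. First, the assertion that ``a symmetric argument gives $\lambda_1^{i_{**}}>0$'' is not a symmetric argument: Lemma \ref{l:rank2system}(a) only bounds the number of solutions with $a(t)<\lambda_1$, so at the minimal index, where all nontrivial values $a^{i_{**}}(r_j^{i_{**}})$ are positive, it gives no contradiction when $\lambda_1^{i_{**}}<0$. The statement you need is still available, but for a different reason: if $\lambda_1^{i_{**}}<0$, then Lemma \ref{l:rank2system}(b) makes every $D_{i_{**},j}<0$ and the proof ends immediately (this is precisely the paper's Case 1); equivalently, under the standing assumption that a $T_N$ order exists, the required sign change in row $i_{**}$ forces $\lambda_1^{i_{**}}>0$. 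Second, your high/low dichotomy at each index tacitly assumes $a^i(r_j^i)\ne\lambda_1^i$ for all $j$, which is needed for (b)/(c) to cover every entry; this must be recorded (as the paper does in \eqref{eq:nondegenerate}), and it follows in one line from the first equation of \eqref{eq:rank2system_i'} since $a^i(r_j^i)=\lambda_1^i$ would give $\lambda_1^i\lambda_2^i=0$. With these two one-line fixes, your argument is complete and matches the paper's proof in substance.
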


\begin{proof}
	Recall the properties of $a^i$ and $F^i$ in \eqref{eq:aiproperty}-\eqref{eq:Fiproperty}, and thus Lemma \ref{l:degenerate} applies to the systems \eqref{eq:rank2system_i'}. In particular, we may assume in addition that
	\begin{equation*}
		\lambda_1^i\ne 0, \qd\lambda_2^i\ne 0\qquad\text{ for all }i=1,\dots, N.
	\end{equation*}
	This makes Lemma \ref{l:rank2system} applicable to \eqref{eq:rank2system_i'}. Note that for each $i$, we must have
	\begin{equation}\label{eq:nondegenerate}
		a^i(r_j^i) \ne \lambda_1^i\qquad\text{ for all }j=1,\dots, N.
	\end{equation}
	Otherwise, the first equation in \eqref{eq:rank2system_i'} would reduce to $0=\lambda_2^i a^i(r_j^i) = \lambda_2^i \lambda_1^i$, which is a contradiction.
	
	Without loss of generality, we assume $v_1\leq\dots\leq v_N$. If there exists $i$ such that $v_i = v_{i+1}$, then $r_{i+1}^i = a^i(r_{i+1}^i) = 0$ by \eqref{eq:hiri} and \eqref{eq:aiproperty}. It follows from the first equation in \eqref{eq:rank2system_i'} with $j=i+1$ that $\lambda_1^i h_{i+1}^i=0$, and thus $h_{i+1}^i=0$ as $\lambda_1^i\ne 0$. By definition \eqref{eq:hiri}, this means $u_i=u_{i+1}$, and therefore $X_i = X_{i+1}$ and $\mathrm{Card}(\mathcal{X})<N$. So in the following we assume
	\begin{equation*}%\label{eq:v_order}
		v_1<v_2<\dots< v_N.
	\end{equation*}
	Since $r_j^i=v_j-v_i$, this order implies that
	\begin{equation}\label{eq:rijsign}
		r_j^i
		\begin{cases}
			&>0\qd\text{ if }j>i,\\
			&<0\qd\text{ if }j<i.
		\end{cases}
	\end{equation}
	And it is clear that $r_j^i=-r_i^j$ and $h_j^i=-h_i^j$ for all $i,j\in\{1,\dots,N\}$. We denote (recalling the notation from Proposition \ref{def_tn_prop} and \eqref{eq:TXi})
	\begin{equation}
		\label{eq_2Dsysdet}
		D_j^i:=\det\lt((T_j^i)^{12} - (T_i^i)^{12}\rt) =\det\lt((T_j^i)^{12}\rt) = (h_j^i)^2-r_j^ia^i(r_j^i).
	\end{equation}
	Further, note that (recalling \eqref{eq:aiFi})
	\begin{align*}
		a^i(r_j^i)&=a(v_i+r_j^i)-a(v_i)=a(v_j)-a(v_i)\\
		&=-(a(v_i)-a(v_j))=-(a(v_j+r_i^j)-a(v_j))=-a^j(r_i^j),
	\end{align*}
	and therefore it holds 
	\begin{equation*}
		D_j^i=D_i^j. 
	\end{equation*}
	Lastly, note that if $D_j^i = 0$ for some $j\ne i$, then $\mathrm{rank}\lt((T_j^i)^{12}\rt) = 1$. The system \eqref{eq:rank2system_i'} says that the last row of $T_j^i$ is a linearly combination of its first two rows, and thus $\mathrm{rank}(T_j^i) = \mathrm{rank}(T_j^i-T_i^i) = 1$, i.e., the set $\mathbb{T}_{\mathcal{X}}^i$ contains rank-one connections and thus does not have an order that forms a $T_N$ configuration. By Lemma \ref{l:TXi}, the same holds true for $\mathcal{X}$. So in the following we further assume
	\begin{equation*}
		D_j^i \ne 0\qquad\text{ for all } j\ne i.
	\end{equation*}
	
	\begin{table}[ht]
		\begin{minipage}{0.45\textwidth}
			\begin{center}
				\begin{tabular}{c|ccccc|}
					$D_j^i$ & j=1&2&3&$\cdots$&$N$ \\\hline
					i=1&$0$&$?$&$?$&$\cdots$&$?$\\
					2&$?$&$0$&$?$&$\cdots$&$?$\\
					3&$?$&$?$&0&$\cdots$&$?$\\
					$\vdots$&$\vdots$&$\vdots$&$\vdots$&$\ddots$&$\vdots$\\
					$N$&$?$&$?$&$?$&$\cdots$&$0$\\\hline
				\end{tabular}
			\end{center}
			\caption{Symmetric table for signs of $D_j^i$.}
			\label{t:1}
		\end{minipage}\hfill
		\begin{minipage}{0.45\textwidth}
			\begin{center}
				\begin{tabular}{c|ccccc|}
					$D_j^i$ & j=1&2&3&$\cdots$&$N$ \\\hline
					i=1&$0$&$-$&$-$&$\cdots$&$-$\\
					2&$-$&\null&\null&$\cdots$&\null\\
					3&$-$&\null&\null&$\cdots$&\null\\
					$\vdots$&$\vdots$&$\vdots$&$\vdots$&$\ddots$&$\vdots$\\
					$N$&$-$&\null&\null&$\cdots$&\null\\\hline
				\end{tabular}
			\end{center}
			\caption{Case 1.}
			\label{t:c1}
		\end{minipage}
	\end{table}
	
	In the remaining proof, we consider the symmetric Table \ref{t:1} and will show that there exists $i\in\{1,\dots,N\}$ such that the $i$-th row (or equivalently the $i$-th column) of the table does not change sign. This implies via \eqref{eq_2Dsysdet} that the set $\lt\{\det\lt((T_j^i)^{12} - (T_i^i)^{12}\rt)\rt\}_{j\ne i}$ does not change sign. By Proposition \ref{def_tn_prop}, this further implies that the set $\mathbb{T}_{\mathcal{X}}^i$ does not have an order that forms a $T_N$ configuration, and hence the same holds true for $\mathcal{X}$ as a consequence of Lemma \ref{l:TXi}.
	\medskip

	\emph{Case 1.} Suppose $\lambda_1^1<0$. Then, recalling \eqref{eq:rijsign}, for all $1< j\leq N$, we have $r_j^1>0$. So $a^1(r_j^1)>0>\lambda_1^1$, and Lemma \ref{l:rank2system}$(b)$ implies that $D_j^1<0$ for all $1<j\leq N$. Hence the first row of Table \ref{t:1} does not change sign; see Table \ref{t:c1}.
	\medskip
	
	\emph{Case 2.} Suppose $\lambda_1^1>0$ and $\lambda_1^2<0$. 
	\medskip
	
	\emph{Step 1.} By \eqref{eq:rijsign}, we have $r_1^2<0$, and  $r_j^2>0$ for all $j>2$. Therefore, recalling \eqref{eq:aiproperty}, we have $a^2(r_j^2)>0>\lambda_1^2$ for all $j>2$. Again Lemma \ref{l:rank2system}$(b)$ implies that $D_j^2<0$ for all $j>2$; see Table \ref{t:c2.1}.
	
	\begin{table}[ht]
		\begin{minipage}{0.45\textwidth}
			\begin{center}
				\begin{tabular}{c|ccccc|}
					$D_j^i$ & j=1&2&3&$\cdots$&$N$ \\\hline
					i=1&0&$?$&\null&$\cdots$&\null\\
					2&$?$&$0$&$-$&$\cdots$&$-$\\
					3&\null&$-$&\null&$\cdots$&\null\\
					$\vdots$&$\vdots$&$\vdots$&$\vdots$&$\ddots$&$\vdots$\\
					$N$&\null&$-$&\null&$\cdots$&\null\\\hline
				\end{tabular}
			\end{center}
			\caption{Case 2, Step 1.}
			\label{t:c2.1}
		\end{minipage}\hfill
		\begin{minipage}{0.45\textwidth}
			\begin{center}
				\begin{tabular}{c|ccccc|}
					$D_j^i$ & j=1&2&3&$\cdots$&$N$ \\\hline
					i=1&0&$?$&\null&$\cdots$&$?$\\
					2&$?$&$0$&$-$&$\cdots$&$-$\\
					3&\null&$-$&\null&$\cdots$&$-$\\
					$\vdots$&$\vdots$&$\vdots$&$\vdots$&$\ddots$&$\vdots$\\
					$N$&$?$&$-$&$-$&$\cdots$&$0$\\\hline
				\end{tabular}
			\end{center}
			\caption{Case 2, Step 2.}
			\label{t:c2.2}
		\end{minipage}
	\end{table}

	\emph{Step 2.} Note that $r_1^N<r_2^N<\dots<r_{N-1}^N<0$, and thus $a^N(r_1^N)<a^N(r_2^N)<\dots<a^N(r_{N-1}^N)<0$ by \eqref{eq:aiproperty}. Then Lemma \ref{l:rank2system}$(a)$ implies $\lambda_1^N<0$, and at most two of $a^N(r_j^N)$ satisfy $a^N(r_j^N)<\lambda_1^N$. If $\lambda_1^N<a^N(r_1^N)$, then Lemma \ref{l:rank2system}$(b)$ implies that $D_j^N<0$ for all $1\leq j<N$, and the last row of Table \ref{t:1} does not change sign. Otherwise, Lemma \ref{l:rank2system}$(b)$ implies that at most two elements in the last row of Table \ref{t:1}, namely $D^N_1$ and/or $D^N_2$, can be positive, and the rest are negative. We already know $D^N_2 = D^2_N<0$, so Table \ref{t:c2.1} becomes Table \ref{t:c2.2}.
	
	\emph{Step 3.} If $D^N_1<0$, then we are done. Suppose $D^N_1>0$. By Lemma \ref{l:rank2system}$(a)$, since $a^1(r^1_1)=0<\lambda_1^1$, we must have $0<\lambda_1^1<a^1(r_3^1)<\dots<a^1(r_N^1)$, and it follows from Lemma \ref{l:rank2system}$(c)$ that $D_3^1>\dots>D_N^1=D^N_1>0$. So Table \ref{t:c2.2} becomes Table \ref{t:c2.3}. Now if $D_1^2 = D_2^1>0$, then the first row of Table \ref{t:c2.3} does not change sign; if $D_1^2 = D_2^1<0$, then the second row of Table \ref{t:c2.3} does not change sign.
	
	\begin{table}[ht]
		\begin{center}
			\begin{tabular}{c|ccccc|}
				$D_j^i$ & j=1&2&3&$\cdots$&$N$ \\\hline
				i=1&$0$&$?$&$+$&$\cdots$&$+$\\
				2&$?$&$0$&$-$&$\cdots$&$-$\\
				3&$+$&$-$&\null&$\cdots$&$-$\\
				$\vdots$&$\vdots$&$\vdots$&$\vdots$&$\ddots$&$\vdots$\\
				$N$&$+$&$-$&$-$&$\cdots$&$0$\\\hline
			\end{tabular}
		\end{center}
		\caption{Case 2, Step 3.}
		\label{t:c2.3}
	\end{table}
	\medskip
	
	\emph{Case 3.} Suppose $\lambda_1^1>0$ and $\lambda_1^2>0$. 
	\medskip
	
	\emph{Step 1.} Consider again the last row. As in Case 2, Step 2, we know that at most two elements in the last row of Table \ref{t:1}, namely $D^N_1$ and/or $D^N_2$, can be positive, and the rest are negative. If $D_j^N<0$ for all $1\leq j<N$, then we are done. Otherwise, first assume $D^N_1>0$. As in Case 2, Step 3, by Lemma \ref{l:rank2system}$(a)$, we must have $0<\lambda_1^1<a^1(r_3^1)<\dots<a^1(r_N^1)$, and Lemma \ref{l:rank2system}$(c)$ implies that $D_3^1>\dots>D_N^1=D^N_1>0$. Since $\lambda_1^2>0>a^2(r_1^2)$, by Lemma \ref{l:rank2system}$(b)$, we also have $D_1^2=D_2^1>0$, so the first row of Table \ref{t:1} does not change sign; see Table \ref{t:c3.1}.
	
	\begin{table}[ht]
		\begin{minipage}{0.45\textwidth}
			\begin{center}
				\begin{tabular}{c|ccccc|}
					$D_j^i$ & j=1&2&3&$\cdots$&$N$ \\\hline
					i=1&0&$+$&+&$\cdots$&$+$\\
					2&$+$&0&\null&$\cdots$&$?$\\
					3&+&\null&\null&$\cdots$&$-$\\
					$\vdots$&$\vdots$&$\vdots$&$\vdots$&$\ddots$&$\vdots$\\
					$N$&$+$&$?$&$-$&$\cdots$&$0$\\\hline
				\end{tabular}
			\end{center}
			\caption{Case 3, Step 1.}
			\label{t:c3.1}
		\end{minipage}\hfill
		\begin{minipage}{0.45\textwidth}
			\begin{center}
				\begin{tabular}{c|ccccc|}
					$D_j^i$ & j=1&2&3&$\cdots$&$N$ \\\hline
					i=1&$0$&$+$&$?$&$\cdots$&$?$\\
					2&$+$&$0$&$+$&$\cdots$&$+$\\
					3&$?$&$+$&\null&$\cdots$&$-$\\
					$\vdots$&$\vdots$&$\vdots$&$\vdots$&$\ddots$&$\vdots$\\
					$N$&$?$&$+$&$-$&$\cdots$&$0$\\\hline
				\end{tabular}
			\end{center}
			\caption{Case 3, Step 2.}
			\label{t:c3.2}
		\end{minipage}
	\end{table}
	
	\emph{Step 2.} It remains to consider the case $D^N_2>0$. Since $a^2(r_1^2)<a^2(r_2^2)=0<a^2(r_3^2)<\dots<a^2(r_N^2)$, Lemma \ref{l:rank2system}$(a)$ implies that $0<\lambda_1^2<a^2(r_3^2)<\dots<a^2(r_N^2)$. Once again Lemma \ref{l:rank2system}$(c)$ implies that $D_3^2>\dots>D_N^2=D^N_2>0$. As in Step 1, since $\lambda_1^2>0>a^2(r_1^2)$, by Lemma \ref{l:rank2system}$(b)$, we also have $D_1^2=D_2^1>0$, so the second row of Table \ref{t:1} does not change sign; see Table \ref{t:c3.2}.
\end{proof}

\section{Proof of Corollary \ref{thm_general2Dtn}}\label{sec:proof}

Suppose $\mathcal{X} = \{X_1,\dots,X_N\}\subset K_a\cap \mathcal{P}$ for some two-dimensional plane $\mathcal{P}\subset\R^{3\times 2}$. If there exists $i$ such that the dimension of $\mathrm{Span}\{X_1-X_i,\dots,X_N-X_i\}$ is $1$, then $\mathcal{X}$ cannot have an order that forms a $T_N$ configuration, for otherwise the rank-one matrices $C_i$ in Definition \ref{def_tn} would satisfy $C_i\in \mathrm{Span}\{X_1-X_i,\dots,X_N-X_i\}$ (see \cite[Eq. (20)]{LP20on}) and thus all the points in $\mathcal{X}$ would be rank-one connected. So in the following we assume that
\begin{equation}\label{eq:dim2}
	\mathrm{dim}\lt(\mathrm{Span}\{X_1-X_i,\dots,X_N-X_i\}\rt) = 2\qquad\text{ for all }i=1,\dots, N.
\end{equation}
Corollary \ref{thm_general2Dtn} follows directly from the following Lemma \ref{l:2Dsysi} and Lemma \ref{l:tn}.

\begin{lem}\label{l:2Dsysi}
	Suppose $a\in C^2(\R)$ satisfies $a'>0$ and $a''>0$ on $\R$. Let $N\geq 6$ and $\mathcal{X}=\{X_i\}_{i=1}^N\subset K_a$ with $X_i = P(u_i, v_i)$. Assume in addition that $\mathcal{X}$ satisfies \eqref{eq:dim2}. If $\mathcal{X}$ has an order that forms a $T_N$ configuration, then, recalling \eqref{eq:hiri}-\eqref{eq:aiFi}, for each $i=1, \dots, N$, there exist constants $\alpha_1^i, \alpha_2^i, \lambda_1^i, \lambda_2^i$ such that
\begin{equation}\label{eq:rank2system_i}
	\begin{cases}
		&a^i(r_j^i)=\alpha_1^ih_j^i+\alpha_2^ir_j^i,\\
		&h_j^i a^i(r_j^i) = \lambda_1^i h_j^i + \lambda_2^i a^i(r_j^i),\\
		&\frac{(h_j^i)^2}{2}+ F^i(r_j^i) = \lambda_1^i r_j^i + \lambda_2^i h_j^i,
	\end{cases}\qquad\text{ for all }j=1, \dots, N,
\end{equation} 
or 
\begin{equation}\label{eq:rank2system_i2}
	\begin{cases}
		&r_j^i=\alpha_1^ih_j^i+\alpha_2^ia^i(r_j^i),\\
		&h_j^i a^i(r_j^i) = \lambda_1^i h_j^i + \lambda_2^i a^i(r_j^i),\\
		&\frac{(h_j^i)^2}{2}+ F^i(r_j^i) = \lambda_1^i r_j^i + \lambda_2^i h_j^i,
	\end{cases}\qquad\text{ for all }j=1, \dots, N.
\end{equation} 
\end{lem}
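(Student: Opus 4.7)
The plan is to extract, for each $i\in\{1,\dots,N\}$, three linearly independent scalar linear relations from the $2$-dimensional structure of $\mathrm{Span}\{X_j-X_i\}_j$, and then to organize these relations into one of the two displayed systems. The work is carried out on the translated matrices $T_j^i$ from \S~\ref{ss:prior}.

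Fix $i$. By the identities \eqref{eq:ha}--\eqref{eq:hF}, $T_j^i$ is obtained from $X_j-X_i$ via the fixed (i.e.\ $j$-independent) invertible row operation that subtracts $a(v_i)$ times the first row and $u_i$ times the second row from the third row. Row operations preserve linear spans, so assumption \eqref{eq:dim2} gives $\dim\mathrm{Span}\{T_j^i:j=1,\dots,N\}=2$ in $\mathbb{R}^{3\times 2}$. Since the $(1,1)$ and $(2,2)$ entries of every $T_j^i$ coincide (both equal $h_j^i$), this span lies inside the $5$-dimensional subspace $\{M\in\mathbb{R}^{3\times 2}:m_{11}=m_{22}\}$. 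Combined with the $2$-dimensional condition, this produces exactly three independent linear relations among the five scalar quantities $h_j^i$, $r_j^i$, $a^i(r_j^i)$, $h_j^i a^i(r_j^i)$, $(h_j^i)^2/2+F^i(r_j^i)$, each holding for all $j=1,\dots,N$; call the corresponding $3$-dimensional space of coefficient tuples $\mathcal{R}\subset\mathbb{R}^5$.

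The crux is to show that two of these three relations can be chosen in the coupled form
\[
h_j^i a^i(r_j^i) = \lambda_1^i h_j^i + \lambda_2^i a^i(r_j^i),\qquad \tfrac{(h_j^i)^2}{2} + F^i(r_j^i) = \lambda_1^i r_j^i + \lambda_2^i h_j^i,
\]
with the \emph{same} constants $(\lambda_1^i,\lambda_2^i)$. I would use the observation that this coupling is equivalent to the existence of a nonzero vector $\vec w=(-\lambda_2^i,1,-\lambda_1^i)\in\mathbb{R}^3$ satisfying $\vec w^T T_j^i=0$ for all $j$ (reading off the two columns of $\vec w^T T_j^i$ recovers the displayed equations), i.e.\ to $\mathrm{rank}(A_{\mathcal{X}}^i)\le 2$. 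To produce $\vec w$, I would invoke the $T_N$-configuration: writing each $X_j-X_i$ as a concrete linear combination of the rank-one matrices $C_1,\dots,C_N$ from Definition~\ref{def_tn}, and using $\sum_k C_k=0$ together with the $2$-dimensionality of $\mathrm{Span}\{X_j-X_i\}_j$ and the hypothesis $N\ge 6$, force $\mathrm{Span}\{C_k\}\subset\mathbb{R}^{3\times 2}$ to be a $2$-dimensional compression space (i.e.\ of the form $a_0\otimes\mathbb{R}^2$ or $A\otimes b_0$ with $A\subset\mathbb{R}^3$ two-dimensional); the projective count for the intersection of a $2$-dimensional subspace with the Segre/rank-one variety prevents finitely many rank-one points from being enough to accommodate $N\geq 6$ distinct $C_k$'s. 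Any such compression space admits a common nonzero left-null vector, yielding $\vec w$.

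With the coupled pair secured, the remaining independent relation in $\mathcal{R}$ involves only $h_j^i$, $r_j^i$, $a^i(r_j^i)$. By Remark \ref{r:linindependent} applied to $\mathbb{T}_{\mathcal{X}}^i$, either $\vec h^i$ and $\vec{r}\,^i$ are linearly independent, in which case the third relation can be solved for $a^i(r_j^i)$ yielding \eqref{eq:rank2system_i}, or $\vec h^i$ and $\vec{\mathfrak{a}}\,^i$ are linearly independent, yielding \eqref{eq:rank2system_i2}. The main obstacle is the coupling step: a direct dimension count inside the $3$-dimensional relation space $\mathcal{R}$ shows that the coupling is not automatic from the $2$-dimensional plane condition alone, so the full $T_N$-structure (rank-one directions $C_k$ together with $\sum_k C_k=0$) must be exploited to constrain $\mathrm{Span}\{C_k\}$ to a compression space and furnish the common left-null vector $\vec w$.
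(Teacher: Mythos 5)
Your overall reduction is reasonable in outline (translate to the $T^i_j$, note the relation space of the rank-two matrix $S^i_{\mathcal X}$ is three-dimensional, get the coupled pair from a common left annihilator of the span, then extract a third relation supported on $(h^i_j,r^i_j,a^i(r^i_j))$ and solve it using Remark \ref{r:linindependent}), but the decisive step — producing the common left annihilator, i.e.\ the coupled pair with the \emph{same} $(\lambda_1^i,\lambda_2^i)$ — is not established. Your projective count claims that $N\ge 6$ distinct rank-one matrices $C_k$ inside the two-dimensional span cannot fit on finitely many rank-one directions. But the $C_k$ of a $T_N$ configuration need not be pairwise non-proportional: only cyclically consecutive ones must be (otherwise two of the $X_j$ coincide or are rank-one connected). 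In the classical diagonal $T_4$ the four $C_k$ occupy only two rank-one lines, and for any even $N$ one can alternate two directions, so $N\ge 6$ is perfectly compatible with the span meeting the rank-one cone in just two projective points. Hence nothing in your argument forces $\mathrm{Span}\{C_k\}$ to be a compression space. Worse, that intermediate goal is impossible: since consecutive $C_k$ are non-proportional, $\mathrm{Span}\{C_k\}$ equals the two-dimensional span of the $X_j-X_i$, and the spaces $a_0\otimes\R^2$ and $A\otimes b_0$ you describe consist entirely of matrices of rank at most one, so every difference $X_j-X_k$ would be a rank-one connection, contradicting the definition of a $T_N$ configuration. What you actually need is strictly weaker — a common left annihilator of the span, which coexists with rank-two elements — and it does not follow from counting rank-one directions.

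For comparison, the paper obtains the coupling from the algebraic structure of $K_a$ rather than from degeneracy of $\mathrm{Span}\{C_k\}$: after using Remark \ref{r:linindependent} to know that rows $1,2$ (or $1,3$) of $S^i_{\mathcal X}$ are independent, it writes the remaining rows as fixed combinations of these two and parametrizes $\mathrm{Span}\{\mathbb{T}^i_{\mathcal X}\}$ by $(s,t)$. The $T_N$ structure is then used only through the fact that this two-dimensional span must contain at least two distinct rank-one directions (consecutive $C_k$ non-proportional); these directions are common roots of the binary quadratics $\det(M^{12})$ and $\det(M^{13})$, and since $\det(M^{12})=s^2-\alpha_1^i st-\alpha_2^i t^2$ has leading coefficient $1$, sharing two distinct roots forces proportionality of the two quadratics. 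Those two proportionality constraints, after eliminating $r^i_j$ (resp.\ $a^i(r^i_j)$) via the first relation, yield exactly the coupled pair with $(\lambda_1^i,\lambda_2^i)=(\gamma_2^i,\gamma_1^i)$ (resp.\ $(\beta_1^i,\beta_2^i)$). So only two rank-one directions are needed, not a continuum. As a small additional slip, the annihilator matching your displayed equations is $(\lambda_1^i,\lambda_2^i,-1)$, not $(-\lambda_2^i,1,-\lambda_1^i)$, and one must also rule out annihilators with vanishing third component, which is where Remark \ref{r:linindependent} enters.
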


\begin{proof}
Denote the matrix $S_{\mathcal{X}}^i\in\R^{5\times N}$ by
\begin{equation*}
	%\label{eq_SX}
	S_{\mathcal{X}}^i:=\begin{bmatrix}
		h_1^i&\null&h_N^i\\r_1^i&\null& r_N^i\\
		a^i(r_1^i)&\cdots&a^i(r_N^i)\\h_1^ia^i(r_1^i)&\null&h_N^ia^i(r_N^i)\\
		\frac{(h_1^i)^2}{2}+F^i(r_1^i)&\null&\frac{(h_N^i)^2}{2}+F^i(r_N^i)
	\end{bmatrix}.
\end{equation*}
It follows from \eqref{eq:ha}-\eqref{eq:hF} that the assumption \eqref{eq:dim2} is equivalent to 
\begin{equation*}
	\mathrm{rank}(S_{\mathcal{X}}^i) = 2\qquad\text{ for all }i=1,\dots, N.
\end{equation*}
Fix some $i\in\{1,\dots,N\}$. By Lemmas \ref{l:linindependent}-\ref{l:TXi} and Remark \ref{r:linindependent}, we infer that the first and second rows of $S_{\mathcal{X}}^i$ are linearly independent, or the first and third rows of $S_{\mathcal{X}}^i$ are linearly independent.
\medskip

\emph{Case 1.}  If the first and second rows of $S_{\mathcal{X}}^i$ are linearly independent, then there exist $\alpha_1^i, \alpha_2^i, \beta_1^i, \beta_2^i, \gamma_1^i, \gamma_2^i \in\R$ such that, for all $1\leq j\leq N$,
\begin{equation}
	\label{eq_2Dsysrow1}
	a^i(r_j^i)=\alpha_1^ih_j^i+\alpha_2^ir_j^i,
\end{equation}
\begin{equation}
	\label{eq_2Dsysrow2}
	h_j^ia^i(r_j^i)=\beta_1^ih_j^i+\beta_2^ir_j^i,
\end{equation}
and
\begin{equation}
	\label{eq_2Dsysrow3}
	\frac{(h_j^i)^2}{2}+F^i(r_j^i)=\gamma_1^ih_j^i+\gamma_2^ir_j^i.
\end{equation}
Recalling \eqref{eq:TXi}, this implies that
\begin{equation*}
\mathrm{Span}\{\mathbb{T}_{\mathcal{X}}^i\}=\left\{\begin{bmatrix}
	s&t\\\alpha_1^is+\alpha_2^it&s\\\beta_1^is+\beta_2^it&\gamma_1^is+\gamma_2^it
\end{bmatrix}:s,t\in\mathbb{R}\right\}.
\end{equation*}
For any $M\in \mathrm{Span}\{\mathbb{T}_{\mathcal{X}}^i\}$, recalling the notation from Proposition \ref{def_tn_prop}, we have
\begin{equation*}
	%\label{eq_2Dsysminor1}
	\det\lt(M^{12}\rt)=s^2-\alpha_1^i st-\alpha_2^i t^2
\end{equation*}
and
\begin{equation*}
	\det\lt(M^{13}\rt)=\gamma_1^i s^2 + (\gamma_2^i-\beta_1^i)st - \beta_2^i t^2. 
\end{equation*}
If $\mathcal{X}$ has an order that forms a $T_N$ configuration, then $\mathbb{T}_{\mathcal{X}}^i$ also has an order that forms a $T_N$ configuration by Lemma \ref{l:TXi}. This requires the two-dimensional subspace $\mathrm{Span}\{\mathbb{T}_{\mathcal{X}}^i\}\subset\R^{3\times 2}$ to contain at least two distinct rank-one directions, for otherwise all the points in $\mathbb{T}_{\mathcal{X}}^i$ would be rank-one connected. Equivalently, this requires the above two functions, viewed as quadratic functions of $s$, to share at least two distinct solutions. Therefore, we deduce that
\begin{equation*}
	\begin{cases}
		\gamma_2^i-\beta_1^i = -\gamma_1^i \alpha_1^i,\\
		\beta_2^i = \gamma_1^i \alpha_2^i.
	\end{cases}
\end{equation*}
Solving for $\beta_1^i, \beta_2^i$ from the above system and plugging the expressions into \eqref{eq_2Dsysrow2}, we obtain
\begin{equation*}
	h_j^ia^i(r_j^i)=(\gamma_2^i+\gamma_1^i\alpha_1^i)h_j^i+\gamma_1^i\alpha_2^i r_j^i. 
\end{equation*}
Further, from \eqref{eq_2Dsysrow1}, we have $\alpha_2^ir_j^i = a^i(r_j^i) - \alpha_1^i h_j^i$. Plugging this into the above and simplifying the equation leads to
\begin{equation*}
	h_j^ia^i(r_j^i) = \gamma_2^i h_j^i + \gamma_1^i  a^i(r_j^i). 
\end{equation*}
This together with \eqref{eq_2Dsysrow1} and \eqref{eq_2Dsysrow3} gives a system in the form of \eqref{eq:rank2system_i} as desired.
\medskip

\emph{Case 2.}  If the first and third rows of $S_{\mathcal{X}}^i$ are linearly independent, then there exist $\alpha_1^i, \alpha_2^i, \beta_1^i, \beta_2^i, \gamma_1^i, \gamma_2^i \in\R$ such that, for all $1\leq j\leq N$,
\begin{equation}
	\label{eq_2Dsysrow1.2}
	r_j^i=\alpha_1^ih_j^i+\alpha_2^ia^i(r_j^i),
\end{equation}
\begin{equation}
	\label{eq_2Dsysrow2.2}
	h_j^ia^i(r_j^i)=\beta_1^ih_j^i+\beta_2^ia^i(r_j^i),
\end{equation}
and
\begin{equation}
	\label{eq_2Dsysrow3.2}
	\frac{(h_j^i)^2}{2}+F^i(r_j^i)=\gamma_1^ih_j^i+\gamma_2^ia^i(r_j^i).
\end{equation}
Then we have
\begin{equation*}
\mathrm{Span}\{\mathbb{T}_{\mathcal{X}}^i\}=\left\{\begin{bmatrix}
	s&\alpha_1^is+\alpha_2^it\\t&s\\\beta_1^is+\beta_2^it&\gamma_1^is+\gamma_2^it
\end{bmatrix}:s,t\in\mathbb{R}\right\}.
\end{equation*}
Again we need the two-dimensional subspace $\mathrm{Span}\{\mathbb{T}_{\mathcal{X}}^i\}\subset\R^{3\times 2}$ to contain at least two distinct rank-one directions. Using $\det\lt(M^{12}\rt)$ and $\det\lt(M^{23}\rt)$ for $M\in \mathrm{Span}\{\mathbb{T}_{\mathcal{X}}^i\}$ along the same lines as in Case 1, we deduce that
\begin{equation*}
	\begin{cases}
		\beta_2^i-\gamma_1^i = -\beta_1^i \alpha_1^i,\\
		\gamma_2^i = \beta_1^i \alpha_2^i.
	\end{cases}
\end{equation*}
Solving for $\gamma_1^i, \gamma_2^i$ from the above system and plugging the expressions into \eqref{eq_2Dsysrow3.2}, we obtain
\begin{equation*}
	\frac{(h_j^i)^2}{2}+F^i(r_j^i)=(\beta_2^i+\beta_1^i\alpha_1^i)h_j^i+\beta_1^i\alpha_2^i a^i(r_j^i). 
\end{equation*}
Further, using \eqref{eq_2Dsysrow1.2} to replace $\alpha_2^ia^i(r_j^i)$ in the above second term on the right hand side, we obtain
\begin{equation*}
	\frac{(h_j^i)^2}{2}+F^i(r_j^i) = \beta_1^i r_j^i + \beta_2^i h_j^i. 
\end{equation*}
This together with \eqref{eq_2Dsysrow1.2} and \eqref{eq_2Dsysrow2.2} gives a system in the form of \eqref{eq:rank2system_i2} as desired.
\end{proof}

\begin{rem}
	It could be that the systems \eqref{eq:rank2system_i} and \eqref{eq:rank2system_i2} are overdetermined and therefore lack sufficiently many solutions. A more careful analysis of these two systems may lead to a proof of Corollary \ref{thm_general2Dtn} without relying on Lemma \ref{l:tn}.
\end{rem}

\begin{rem}\label{r:rank3}
	With only the two equations in the system \eqref{eq:rank2system_i'}, the matrix $S_{\mathcal{X}}^i$ may have rank 3, corresponding to the case
	\begin{equation*}
		\mathrm{dim}\lt(\mathrm{Span}\{X_1-X_i,\dots,X_N-X_i\}\rt) = 3.
	\end{equation*}
\end{rem}

\begin{appendices}
	\section{An example satisfying the assumptions in Theorem \ref{thm_tn}}\label{sec:example}

\begin{lem}\label{l:example}
	There exist a function $a\in C^2(\R)$ with $a'>0$ and $a''>0$ on $\R$, and a set $\mathcal{X}\subset K_a$ consisting of at least $6$ points, such that the condition \eqref{eq:r2assumption} is satisfied.
\end{lem}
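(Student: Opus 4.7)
The plan is to reverse-engineer an example. Taking $Q = X_1$ and carrying out the column operations described at the start of \S~\ref{sec:main} together with the row operations \eqref{eq:ha}--\eqref{eq:hF}, the condition $\mathrm{rank}(A_{\mathcal{X}} - \Pi^N(Q)) = 2$ is equivalent to the existence of constants $\lambda_1, \lambda_2$ such that the system \eqref{eq:rank2system_i'} (with $i = 1$) is satisfied for all $j$, together with the rank not collapsing to $1$. After translating so that $(u_1, v_1) = (0, 0)$ (equivalently, replacing $a, F$ by $a^1, F^1$ from \eqref{eq:aiFi}), the task reduces to producing $a \in C^2(\R)$ with $a', a'' > 0$ and $a(0) = 0$, together with constants $\lambda_1, \lambda_2$, admitting at least five nontrivial solutions of the system \eqref{eq:rank2system}.

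Fix $\lambda_1, \lambda_2 > 0$. For $y := a(v) \ne \lambda_1$, the first equation of \eqref{eq:rank2system} gives $s = \lambda_2 y/(y - \lambda_1)$, whereupon the second equation becomes the scalar constraint $F(v) = \lambda_1 v + K(y)$, where
\[
K(y) := \frac{\lambda_2^2\, y(y - 2\lambda_1)}{2(y - \lambda_1)^2}, \qquad K'(y) = \frac{\lambda_1^2 \lambda_2^2}{(y - \lambda_1)^3} > 0 \text{ for } y > \lambda_1.
\]
Pick five values $2\lambda_1 < y_1 < \cdots < y_5$ and set $K_j := K(y_j) > 0$. By Lemma \ref{l:rank2system}(c), any corresponding solutions $(s_j, v_j)$ of \eqref{eq:rank2system} with $a(v_j) = y_j$ are automatically pairwise distinct, and the rank of the reduced matrix equals $2$ (not $1$) since the five $y_j$'s are distinct (the degenerate case would force all $y_j$ equal).

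It remains to find $0 < v_1 < \cdots < v_5$ and a strictly convex strictly increasing $C^2$ function $a$ on $\R$ with $a(0) = 0$, $a(v_j) = y_j$, and $\int_0^{v_j} a\,dt = \lambda_1 v_j + K_j$. The strict convexity of $a$ imposes, for each $j = 2, \ldots, 5$ (with $v_0 = y_0 = K_0 = 0$), the chord and endpoint bounds
\[
y_{j-1}(v_j - v_{j-1}) < \int_{v_{j-1}}^{v_j} a\,dt < \frac{y_{j-1} + y_j}{2}(v_j - v_{j-1}),
\]
together with $0 < \int_0^{v_1} a < y_1 v_1/2$ on the first segment. Under the identification $\int_{v_{j-1}}^{v_j} a = \lambda_1(v_j - v_{j-1}) + K_j - K_{j-1}$, these translate into nonempty open windows for each $v_j - v_{j-1}$ (using $y_1 > 2\lambda_1$ and $y_j > y_{j-1} > \lambda_1$). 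Choose $v_j$'s in these windows such that the secant slopes $(y_j - y_{j-1})/(v_j - v_{j-1})$ are also strictly increasing in $j$ (an additional open condition, easily arranged by taking the $y_j$'s growing sufficiently fast). With the data $(v_j, y_j, F(v_j))$ consistent with a strictly convex $C^2$ interpolant, construct $a$ on $[0, v_5]$ via Hermite-type interpolation matching the prescribed pointwise values and integral conditions while preserving $a', a'' > 0$, and smoothly extend to $\R$ (for example by affine extrapolation of $a'$ outside $[0, v_5]$ followed by $C^2$-smoothing).

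The main technical obstacle is the final interpolation-and-extension step: producing $a \in C^2(\R)$ with $a', a'' > 0$ everywhere that simultaneously matches the five pointwise values $a(v_j) = y_j$ and the five integral conditions $F(v_j) = \lambda_1 v_j + K_j$. The open-window analysis above shows the problem is not obstructed in principle, but carrying it out explicitly requires some care in the gluing. An alternative approach, likely the one taken in the paper's appendix, is to exhibit a fully explicit analytic $a$ depending on a small number of tunable parameters and verify the six solutions of \eqref{eq:rank2system} by direct computation.
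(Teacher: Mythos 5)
Your reduction is correct and coincides with the paper's: after translating so that $(u_1,v_1)=(0,0)$ and taking $Q=P(0,0)$, condition \eqref{eq:r2assumption} amounts to finding $\lambda_1,\lambda_2$ and at least five nontrivial solutions of \eqref{eq:rank2system}, and your scalar constraint $F(v)=\lambda_1 v+K(a(v))$ is exactly the paper's equation \eqref{eqp33} (your $K(y)$ equals $\lambda_2^2/2$ plus the paper's $q$). The argument that the rank is $2$ rather than $1$ is also recoverable along the lines of Lemma \ref{l:linindependent}. However, the heart of the lemma --- actually producing a function $a$ with $a'>0$, $a''>0$ on all of $\R$ realizing the prescribed data --- is precisely the step you leave unproved. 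Your ``open windows'' are only per-segment necessary conditions: the bound $y_{j-1}\,(v_j-v_{j-1})<\int_{v_{j-1}}^{v_j}a<\tfrac{y_{j-1}+y_j}{2}(v_j-v_{j-1})$ treats each segment as if its shape could be chosen freely, but global convexity forces the one-sided derivatives of $a$ to match monotonically at each node $v_{j-1}$. Pushing the integral on segment $j-1$ toward its upper bound forces $a'$ near $v_{j-1}^-$ to be close to that segment's secant slope, which is incompatible with pushing the integral on segment $j$ toward its lower bound (which requires $a'$ near $v_{j-1}^+$ to be small). So the windows are coupled, and ``nonempty open windows for each $v_j-v_{j-1}$'' plus increasing secant slopes does not by itself yield a consistent strictly convex $C^2$ interpolant satisfying all five integral conditions; the final ``Hermite-type interpolation \dots while preserving $a',a''>0$'' is asserted rather than carried out. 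As written, the proposal is a plausible programme, not a proof.

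Your closing guess about the paper's route is accurate: the paper sidesteps the interpolation problem entirely by writing down an explicit $a$ (namely $e^t-1$ for $t\le 0$ and $\tfrac{k}{6}t^3+\tfrac12 t^2+t$ for $t>0$ with $k=10^8$), fixing $\lambda_1=-\tfrac12$, $\lambda_2=\tfrac1{10}$, and then counting roots of $p(v)=q(v)$ directly: convexity of $p$ and concavity of $q$ where $a(v)<\lambda_1$ give two roots there, a sign change plus the trivial root give two in $(\ln(1/2),0]$, and the large coefficient $k$ forces a sharp concave-to-convex transition of $q$ just to the right of $0$, producing two more roots on $(0,\infty)$. Note also that the paper's example places solutions on both sides of $\lambda_1$ with $\lambda_1<0$, whereas you insist on $\lambda_1>0$ and all $y_j>2\lambda_1$; nothing in the paper rules your regime out, but it is an additional unverified feature of your construction rather than something you can take for granted.
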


\begin{proof}
	Let $k>0$ be a constant that will be specified later. Consider the function $a:\R\to\R$ given by
	\begin{equation}\label{eq:a}
		a(t):=\begin{cases}
			e^t-1 &\text{if }t\leq 0,\\
			\frac{k}{6}t^3 + \frac 12 t^2 + t &\text{if }t>0. 
		\end{cases}
	\end{equation}
	Then 
	\begin{equation*}
		a'(t)=\begin{cases}
			e^t &\text{if }t\leq 0,\\
			\frac{k}{2}t^2 + t + 1 &\text{if }t>0,
		\end{cases}
	\end{equation*}
	and
	\begin{equation}\label{eq:a''}
		a''(t)=\begin{cases}
			e^t &\text{if }t\leq 0,\\
			kt + 1 &\text{if }t>0. 
		\end{cases}
	\end{equation}
	Therefore, $a\in C^2(\R)$ satisfies $a'>0$ and $a''>0$ on $\R$. Further, define 
		\begin{equation*}
		F(t):=\begin{cases}
			e^t-t - 1 &\text{if }t\leq 0,\\
			\frac{k}{24}t^4 + \frac 16 t^3 + \frac 12 t^2 &\text{if }t>0,
		\end{cases}
	\end{equation*}
	so that $F' = a$, and $a(0)=F(0)=0$. 
	
	Now consider the system of equations \eqref{eq:rank2system}, which we rewrite here in the unknowns $u, v$:
	\begin{equation}\label{eq:r2system}
		\begin{cases}
			&u a(v) = \lambda_1 u+ \lambda_2 a(v),\\
			&\frac{u^2}{2}+ F(v) = \lambda_1 v + \lambda_2 u.
		\end{cases}
	\end{equation} 
	We claim that, for an appropriate choice of the constants $k>0$ and $\lambda_1\ne 0, \lambda_2\ne 0$, the above system has at least $6$ distinct solutions $(u_i, v_i)$ for $i=1,\dots, 6$, including the trivial solution $(0,0)$. Taking this for granted, define the set $\mathcal{X}:=\{P(u_i, v_i)\}_{i=1}^6 \subset K_a$, and let $Q=P(0,0) = 0\in \R^{3\times 2}$. Recalling the definitions of $A_{\mathcal{X}}$ and $\Pi^N(Q)$ in \eqref{eq_atk} and \eqref{eq_Pi}, respectively, $(u_i, v_i)$ being solutions of the system \eqref{eq:r2system} implies $\mathrm{rank}\lt(A_{\mathcal{X}} - \Pi^6(Q)\rt)\leq 2$. If $\mathrm{rank}\lt(A_{\mathcal{X}} - \Pi^6(Q)\rt)=1$, then, recalling \eqref{eq:uva}, the vectors $\vec u$ and $\vec v$ would be linearly dependent, so would $\vec u$ and $\vec {\mathfrak{a}}$. The proof of Lemma \ref{l:linindependent} would then imply $\mathrm{Card}(\mathcal{X})\leq 2$, which is a contradiction. Hence, the condition \eqref{eq:r2assumption} is satisfied.
	
	It remains to choose $k>0$ and $\lambda_1\ne 0, \lambda_2\ne 0$ so that the system \eqref{eq:r2system} has at least 6 distinct solutions. To this end, we first borrow some calculations from the proof of \cite[Lemma 20]{LP20on}. The argument leading to \eqref{eq:nondegenerate} shows that any solution $(u,v)$ of \eqref{eq:r2system} must satisfy $a(v)\ne \lambda_1$, provided $\lambda_1\ne 0, \lambda_2\ne 0$. Solving for $u$ from the first equation in \eqref{eq:r2system}, we obtain 
	\begin{equation}\label{eq:u}
		u = \frac{\lambda_2 a(v)}{a(v) - \lambda_1}. 
	\end{equation}
	Plugging this into the second equation in \eqref{eq:r2system} and simplifying it gives
	\begin{equation}
		\label{eqp33}
		F(v)-\lm_1 v-\frac{\lm_2^2}{2} = -\frac{\lm_1^2\lm_2^2}{2(a(v)-\lm_1)^2}.
	\end{equation}
	Let us denote
	\begin{equation*}
		%\label{eqp44}
		p(v):=F(v)-\lm_1 v-\frac{\lm_2^2}{2},\qquad q(v):=-\frac{\lm_1^2\lm_2^2}{2(a(v)-\lm_1)^2}.
	\end{equation*}
	Direct calculations using $F'=a$ show that
	\begin{equation}\label{eq:p}
		p'(v)=a(v)-\lm_1, \qquad p''(v)=a'(v),
	\end{equation}
	and
	\begin{equation}\label{eq:q}
		q'(v)=\frac{\lm_1^2\lm_2^2a'(v)}{(a(v)-\lm_1)^3}, \qquad q''(v)=\lm_1^2\lm_2^2\frac{a''(v)\lt(a(v)-\lm_1\rt)-3a'(v)^2}{(a(v)-\lm_1)^{4}}.
	\end{equation}
	Further, note that $a(0)=F(0)=0$, and $a'(0) = a''(0)=1$, so that
	\begin{equation}\label{eq:at0}
		p'(0) = -\lambda_1, \qquad p''(0) = 1, \qquad q'(0) = -\frac{\lambda_2^2}{\lambda_1}, \qquad q''(0) = \frac{\lambda_2^2}{\lambda_1^2}\lt(-\lambda_1-3\rt). 
	\end{equation}
	Now we choose
	\begin{equation}\label{eq:lambda}
		\lambda_1 = -\frac 12, \qquad \lambda_2 = \frac{1}{10}. 
	\end{equation}
	In the following, we show that equation \eqref{eqp33} has at least two distinct solutions in each of the intervals $(-\infty, \ln(1/2))$, $(\ln(1/2),0]$, and $(0,\infty)$, and each solution provides a solution $(u,v)$ of \eqref{eq:r2system} via \eqref{eq:u}.
	\medskip
	
	\emph{Case 1.} Consider $v<\ln(1/2)$, i.e., $a(v)<\lambda_1$. The calculations in \eqref{eq:p}-\eqref{eq:q} show that $p''(v) = a'(v)>0$, and $q''(v)<0$ for $a(v)<\lambda_1$ as $a''(v)>0$. Thus \eqref{eqp33} can have up to 2 distinct solutions for $v<\ln(1/2)$. Explicit calculations, e.g., using MATLAB, show that
	\begin{equation*}
		p(-1) < q(-1). 
	\end{equation*}
	Further, from the explicit formulas for $p$ and $q$, it is clear that $p(v)\to \infty$ as $v\to -\infty$, and $q(v) \to -1/200$ as $v\to -\infty$ and $q(v)\to -\infty$ as $v\to \ln(1/2)$. Thus \eqref{eqp33} has two distinct solutions for $v<\ln(1/2)$.
	\medskip
	
	\emph{Case 2.} Consider $\ln(1/2)<v\leq 0$, i.e., $\lambda_1<a(v)\leq 0$. Note that $v=0$ is a trivial solution of \eqref{eqp33}. Using MATLAB, we find
	\begin{equation*}
		p(-0.3) < q(-0.3). 
	\end{equation*}
	Again, as $q(v)\to -\infty$ as $v\to \ln(1/2)$, equation \eqref{eqp33} has at least one solution in $(\ln(1/2), -0.3)$. Together with 0, equation \eqref{eqp33} has at least two distinct solutions in $(\ln(1/2), 0]$.
	\medskip
	
	\emph{Case 3.} Finally, consider $v>0$. From \eqref{eq:at0}-\eqref{eq:lambda}, we deduce 
	\begin{equation}\label{eq:at0'}
		p'(0) > q'(0), \qquad p''(0) > 0 > q''(0). 
	\end{equation}
	Here the idea is that, recalling \eqref{eq:a''}, by choosing $k>0$ sufficiently large, which will affect the $a''$ term in the expression of $q''$ in \eqref{eq:q}, we can expect the function $q$ to exhibit a sharp transition from concave to convex in a small interval $(0,\delta)$ to create two solutions of \eqref{eqp33} in this small interval. Explicit calculations using MATLAB show that, taking, e.g., 
	\begin{equation*}
		k = 10^8,
	\end{equation*}
	we have
	\begin{equation*}
		p(0.003) < q(0.003). 
	\end{equation*}
	The conditions in \eqref{eq:at0'} together with $p(0)=q(0)$ imply that $p(v)> q(v)$ for $v\in (0,\epsilon)$ for some sufficiently small $\epsilon>0$. Further, $p(v)\to\infty$ and $q(v)\to 0$ as $v\to\infty$. Thus, equation \eqref{eqp33} has at least one solution in each of the intervals $(0,0.003)$ and $(0.003,\infty)$.
\end{proof}

\begin{rem}
	Repeated modifications of the function $a$ given in \eqref{eq:a} for $t>0$ could lead to examples of $\mathcal{X}\subset K_a$ with $N$ points for larger values of $N$ satisfying the assumption \eqref{eq:r2assumption}.
\end{rem}
\end{appendices}

\bibliographystyle{abbrv}
\bibliography{ref_2D_TN}

\end{document}